\newtheorem{thm}{Theorem}[section]
\newtheorem{la}[thm]{Lemma}
\newtheorem{Defn}[thm]{Definition}
\newtheorem{Remark}[thm]{Remark}
\newtheorem{Conj}[thm]{Conjecture}
\newtheorem{prop}[thm]{Proposition}
\newtheorem{Example}[thm]{Example}
\newtheorem{Number}[thm]{\!\!}
\newenvironment{defn}{\begin{Defn}\rm}{\end{Defn}}
\newenvironment{rem}{\begin{Remark}\rm}{\end{Remark}}
\newenvironment{numba}{\begin{Number}\rm}{\end{Number}}
\newenvironment{proof}{{\noindent\bf Proof.}}%
                  {\nopagebreak\hspace*{\fill}$\Box$\medskip\par}
\newcommand{\cA}{{\mathcal A}}
\newcommand{\cF}{{\mathcal F}}
\newcommand{\ve}{\varepsilon}
\newcommand{\R}{{\mathbb R}}
\newcommand{\N}{{\mathbb N}}
\newcommand{\mto}{\mapsto}
\newcommand{\sub}{\subseteq}
\newcommand{\wb}{\overline}
\DeclareMathOperator{\id}{id}
\DeclareMathOperator{\Spann}{span}
\DeclareMathOperator{\GL}{GL}
\DeclareMathOperator{\Sym}{Sym}
\DeclareMathOperator{\pr}{pr}
\DeclareMathOperator{\Diff}{Diff}
\DeclareMathOperator{\Lip}{Lip}
\DeclareMathOperator{\op}{op}
\DeclareMathOperator{\str}{str}
\DeclareMathOperator{\face}{fr}
\DeclareMathOperator{\algint}{algint}
\DeclareMathOperator{\aff}{aff}
\DeclareMathOperator{\ind}{ind}
\begin{document}
\begin{center}
{\large\bf Diffeomorphism groups of convex polytopes}\\[3mm]
{\bf Helge Gl\"{o}ckner}\vspace{1mm}
\end{center}
\begin{abstract}
\noindent
Let $M$ be a convex polytope
in $\R^n$, with non-empty interior.
We turn the group $\Diff(M)$
of all $C^\infty$-diffeomorphisms
of~$M$ into a regular
Lie group.
\end{abstract}
\noindent
{\small{\bf Keywords and phrases.}
Polytope, polyhedron, diffeomorphism group,
infinite-dimensional Lie group, regularity,
manifold with corners, local addition\\[2mm]
{\bf MSC 2020 Classification.}
58D05 (primary);
% groups of diffeos as manifolds,
22E65,
%infdim Lie
46T05,
%infdim mfd
46T10,
% mfd of mappings
52B11,
% n-dimensional polytopes
52B15,
% symmetry properties of polytopes
52B70
% polyhedral manifolds
(secondary)}
\section{Introduction and statement of results}
Let $E$ be a finite-dimensional real vector space
and $M\sub E$
be the convex hull of a finite subset of~$E$,
with non-empty interior~$M^0$.
For each $x\in M$,
there exists a smallest face $M(x)$ of~$M$
such that $x\in M(x)$;
we set
\[
E(x):=\Spann_\R(M(x)-x)\sub E .
\]
For $k\in \N_0\cup\{\infty\}$,
endow the space $C^k(M,E)$
of all $C^k$-maps
$f\colon M\to E$ with the compact-open
$C^k$-topology (as in \cite[Section 1.7]{GaN});
then
\[
C^k_{\str}(M,E):=\{f\in C^k(M,E)\colon(\forall x\in M)\;
f(x)\in E(x)\}
\]
is a closed vector subspace
(on which the induced
topology will be used).
We regard the functions $f\in C^k(M,E)$
as $C^k$-vector fields on~$M$;
the elements $f\in C^k_{\str}(M,E)$
will be called \emph{stratified} $C^k$-vector fields
(cf.\ \cite{Eyn}).
Let
\[
\Diff^k(M):=\{\phi\in C^k(M,M)\colon (\exists \psi\in C^k(M,M))\colon
\phi\circ\psi=\psi\circ\phi=\id_M\}
\]
be the set of all
$C^k$-diffeomorphisms
$\phi\colon M\to M$,
and abbreviate $\Diff(M):=\Diff^\infty(M)$.
Then $\Diff^k(M)$ is a group,
using composition
of diffeomorphisms as the group multiplication.
The neutral element is the identity map~$\id_M$.
Our main goal is the following result:
\begin{thm}\label{main-thm}
The group $\Diff(M)$ admits a smooth manifold structure
modeled on $C^\infty_{\str}(M,E)$
making it a Lie group.
\end{thm}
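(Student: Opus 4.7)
The overall plan is to build $\Diff(M)$ as a Lie group by constructing one chart near $\id_M$ from a local addition on $M$ and then transporting it by composition to the rest of the group. Since $M$ is convex, the natural local addition is simply $\Sigma(x,v):=x+v$, and the candidate chart is
\[
\Phi\colon \Omega\to\Diff(M),\qquad f\mto \id_M+f,
\]
on some open $0$-neighborhood $\Omega\sub C^\infty_{\str}(M,E)$. The first task is to verify that for $f\in\Omega$ sufficiently small, $\id_M+f$ is a $C^\infty$-diffeomorphism of $M$. The stratification condition $f(x)\in E(x)$ is crucial: the displacement $x+f(x)$ lies in $\aff(M(x))$, and because $x$ lies in the relative interior of its smallest face $M(x)$, smallness of $f$ keeps $x+f(x)$ inside $M(x)\sub M$. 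Within the relative interior of each face $F$, the inverse function theorem applied inside $\aff(F)$ gives a smooth local inverse; these glue to a global $C^\infty$-inverse $\psi\colon M\to M$, and the equality $\psi(y)-y\in E(y)$ follows from the fact that $\id_M+f$ permutes the open strata of $M$.

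Once $\Phi$ is a bijection from $\Omega$ onto an open neighborhood of $\id_M$ in a suitable group topology on $\Diff(M)$, I would define charts at an arbitrary $\phi\in\Diff(M)$ by right translation, $\Phi_\phi(f):=\Phi(f)\circ\phi$. Chart compatibility and smoothness of the group operations all reduce, after reading through $\Phi$, to smoothness of two maps on open subsets of $C^\infty_{\str}(M,E)$: the composition map
\[
(f,g)\mto (\id_M+f)\circ(\id_M+g)-\id_M,
\]
and the inversion map $f\mto (\id_M+f)^{-1}-\id_M$. Smoothness of composition in the compact-open $C^\infty$-topology for $C^\infty$-maps on a manifold with corners is a function-space fact that I would invoke from the framework developed in~\cite{GaN}, taking care that the stratification is preserved under composition.

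The main obstacle is smoothness of the inversion map $f\mto(\id_M+f)^{-1}-\id_M$ as a map into $C^\infty_{\str}(M,E)$. On the open interior $M^0$, a parameter-dependent inverse function theorem handles this directly, since there $E(x)=E$. The delicate point is propagating this smoothly up to the boundary strata, where the inversion has to be performed within each $\aff(M(x))$ of lower dimension and the resulting family of face-inverses has to assemble into a single $C^\infty$-map on $M$ in the manifold-with-corners sense. The stratification hypothesis is exactly what makes this consistent, because it forces $\id_M+f$ to preserve the face stratification, so the face-by-face inversions are globally coherent and depend smoothly on $f$ in the compact-open $C^\infty$-topology. Once this inversion smoothness is established, the charts $\Phi_\phi$ form a smooth atlas, the group multiplication and inversion are smooth in every chart, and $\Diff(M)$ becomes a Lie group modelled on $C^\infty_{\str}(M,E)$ as claimed.
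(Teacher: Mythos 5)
Your plan works smoothly for the subgroup of face\-/respecting diffeomorphisms, but it has a genuine gap where it passes to all of $\Diff(M)$. With right\-/translation charts $\Phi_\phi(f)=(\id_M+f)\circ\phi$, expressing the product $\bigl((\id_M+f)\circ\phi\bigr)\circ\bigl((\id_M+g)\circ\phi'\bigr)$ in a chart at $\phi\circ\phi'$ forces you to rewrite it as $(\id_M+f)\circ\bigl(\phi\circ(\id_M+g)\circ\phi^{-1}\bigr)\circ(\phi\circ\phi')$. So smoothness of multiplication does \emph{not} reduce to the two maps you list (composition and inversion of near\-/identity elements); it also requires that, for every fixed $\psi\in\Diff(M)$, the conjugation $g\mto\psi\circ(\id_M+g)\circ\psi^{-1}-\id_M$ is a well\-/defined smooth map into $C^\infty_{\str}(M,E)$ near $0$. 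This is nontrivial precisely because $\psi$ need not be face respecting: $\Diff(M)$ is strictly larger than the subgroup reachable from your chart at $\id_M$ (think of the symmetries of a cube), and the subgroup generated by any identity neighbourhood consists of face\-/respecting maps only. Even the well\-/definedness of the conjugation (that it lands in stratified fields, equivalently that $\psi\circ(\id_M+g)\circ\psi^{-1}$ is again face respecting) needs the fact that \emph{every} $C^\infty$-diffeomorphism of $M$ permutes the faces, which must be proved (the paper does this via the index/stratum invariance of Lemma~\ref{face-into-face} and Lemma~\ref{fin-index}). The paper then closes exactly this gap: it builds the Lie group structure on $\Diff^\infty_{\face}(M)$ (Proposition~\ref{main-prop}), shows $\Diff^\infty_{\face}(M)$ is a normal subgroup of finite index (Lemma~\ref{fin-index}), and invokes the Bourbaki-type local description of Lie group structures, which requires verifying smoothness of $I_\psi\colon\phi\mto\psi\circ\phi\circ\psi^{-1}$ for each $\psi\in\Diff(M)$; this is done using the smooth evaluation map and the exponential law. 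Your proposal never addresses this conjugation step, and without it the atlas of translated charts does not yield smooth multiplication.

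A secondary weak point: your justification that $\id_M+f$ is a diffeomorphism for small $f$ is not correct as stated. Sup\-/norm smallness does not keep $x+f(x)$ inside $M(x)$, since points $x$ in the relative interior of a face can be arbitrarily close to its relative boundary (on the bottom edge of a square, a stratified $f$ with tiny sup norm but slope $-2$ near a vertex pushes points out of $M$). One needs control of the derivative, $\|f'(x)\|_{\op}<1$ uniformly, and even then the correct argument is not pointwise: the paper's Proposition~\ref{prop-2} proceeds by induction on $\dim E$ over the faces, combined with the Lipschitz inverse function theorem and a supporting\-/hyperplane argument to get $\phi(M)=M$, followed by a separate bootstrap showing $\phi^{-1}$ is $C^k$ up to the boundary. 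Your gluing sketch for smoothness of the inversion map $f\mto(\id_M+f)^{-1}-\id_M$ in the function\-/space variable is likewise only a gesture; the paper handles it through Lemma~\ref{GN-thmC} (a parameter\-/dependent inversion theorem from \cite{GN2}) together with the exponential law, rather than by assembling face\-/by\-/face inverses.
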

More details concerning the Lie group
structure on $\Diff(M)$ are now described.
If $P\sub E$ is a convex polytope,
we call a homeomorphism
$\phi\colon P\to P$
\emph{face respecting}
if $\phi(F)=F$ for each face~$F$ of~$P$.
Then also $\phi^{-1}$ is
face respecting.
For each $k\in \N\cup\{\infty\}$,
the set
\[
\Diff^k_{\face}(M):=\{\phi\in \Diff^k(M)\colon
\mbox{$\phi$ is face respecting}\}
\]
is a normal subgroup of finite index in
$\Diff^k(M)$ (see Lemma~\ref{fin-index})
and
\[
\Omega_k:=\Diff^k_{\face}(M)-\id_M
\]
is an open $0$-neighbourhood in $C^k_{\str}(M,E)$
(see Lemma~\ref{omega-open}).
We give $\Diff^k_{\face}(M)$ the smooth manifold structure
modeled on $C^k_{\str}(M,E)$
making the bijective map
\[
\Phi_k\colon
\Diff^k_{\face}(M)\to\Omega_k,
\quad\phi\mto\phi-\id_M
\]
a $C^\infty$-diffeomorphism.
Then the following holds,
using $C^{k,\ell}$-maps as in~\cite{AaS}:
\begin{prop}\label{main-prop}
For all $k\in\N\cup\{\infty\}$ and $\ell\in \N_0\cup\{\infty\}$,
the map
\[
c_{k,\ell}\colon
\Diff^{k+\ell}_{\face}(M)\times \Diff^k_{\face}(M)\to\Diff^k_{\face}(M),\quad
(\phi,\psi)\mto\phi\circ\psi
\]
is $C^{\infty,\ell}$ $($and thus $C^\ell)$; moreover, the map
\[
\iota_{k,\ell}\colon \Diff^{k+\ell}_{\face}(M)\to\Diff^k_{\face}(M),\quad
\phi\mto\phi^{-1}
\]
is $C^\ell$.
Notably, $\Diff^\infty_{\face}(M)$ is a smooth Lie group
modeled on $C^\infty_{\str}(M,E)$.
\end{prop}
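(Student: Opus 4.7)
The plan is to work globally in the chart $\Phi_k$ and reduce Proposition~\ref{main-prop} to regularity statements for the composition map on spaces of vector fields. The principal technical input is the composition theorem for $C^{k,\ell}$-maps from~\cite{AaS}, which supplies precisely the ``loss of derivatives'' behaviour asserted by the proposition.

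\emph{Composition.} Writing $\phi=\id_M+f$ with $f\in\Omega_{k+\ell}$ and $\psi=\id_M+g$ with $g\in\Omega_k$, a direct computation yields
\[
\Phi_k(c_{k,\ell}(\phi,\psi))=\phi\circ\psi-\id_M=g+f\circ(\id_M+g).
\]
The first summand is continuous linear in $(f,g)$, hence $C^{\infty,\ell}$. For the second, $(f,g)\mapsto f\circ(\id_M+g)$ is continuous linear in $f$ for each fixed $g$, while the dependence on $g$ is the source of the derivative loss. Combining a continuous linear extension $C^k_{\str}(M,E)\to C^k(U,E)$ for a suitable open $U\supseteq M$ with the composition theorem of~\cite{AaS}, one obtains that this map is $C^{\infty,\ell}$ into $C^k(M,E)$. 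Its image lies in $C^k_{\str}(M,E)$ because the composition of face-respecting $C^k$-diffeomorphisms is again face-respecting, so $c_{k,\ell}(\phi,\psi)\in\Diff^k_{\face}(M)$.

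\emph{Inversion.} In the chart, the equation $c(\phi,\psi)=\id_M$ becomes $F(f,g):=g+f\circ(\id_M+g)=0$, and one checks that $\partial_2 F(0,0)=\id_{C^k_{\str}(M,E)}$. For each $k\in\N$ the model space $C^k_{\str}(M,E)$ is a Banach space, so the classical implicit function theorem produces $\iota_{k,\ell}$ as a $C^\ell$-map. For $k=\infty$, one bootstraps: continuity of $\iota$ is verified directly, and then differentiating the identity $c(\phi,\iota(\phi))=\id_M$ in conjunction with the $C^{\infty,\ell}$-regularity of $c$ yields successively higher derivatives of $\iota$ up to order $\ell$.

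The main obstacle is to invoke the $C^{k,\ell}$-composition theorem on the polytope domain $M$: one must verify that the standard $\Omega$-lemma estimates, typically phrased for maps on open sets or on smooth manifolds without boundary, survive in the presence of corners and respect the stratification condition on vector fields. Once this is handled, specialising both statements to $\ell=\infty$ immediately yields that $\Diff^\infty_{\face}(M)$ is a smooth Lie group modelled on $C^\infty_{\str}(M,E)$.
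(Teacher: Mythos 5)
Your composition argument leaves its crux unproved. The whole content of the first half of the proposition is the joint $C^{\infty,\ell}$-regularity of $(f,g)\mapsto f\circ(\id_M+g)$ on $C^{k+\ell}(M,E)\times\Omega_k$; you defer this to an unspecified ``composition theorem'' of \cite{AaS} combined with a continuous linear extension operator $C^k_{\str}(M,E)\to C^k(U,E)$ for an open $U\supseteq M$, and you yourself flag the validity of the $\Omega$-lemma estimates on a polytope as ``the main obstacle'' without resolving it. The extension operator is a nontrivial Whitney--Seeley-type input that you neither construct nor cite, and it is also unnecessary: the paper circumvents both points by the exponential law (\cite[Theorem 3.20]{Alz}), reducing the assertion to showing that $(f,g,x)\mapsto f(x+g(x))$ is $C^{\infty,\ell,k}$, which follows from the $C^{\infty,k}$-property of the evaluation maps $C^k(M,E)\times M\to E$ (already available for locally convex domains with dense interior, hence for $M$ itself) together with the chain rule for $C^{r,s}$-maps. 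Your observation that the corestriction to $\Omega_k$ keeps the class $C^\ell$ because $C^k_{\str}(M,E)$ is closed in $C^k(M,E)$ is correct, but it does not touch the missing step.

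The inversion argument has genuine gaps as well. First, the implicit function theorem applied at $(0,0)$ with $\partial_2F(0,0)=\id$ only yields that $\iota_{k,\ell}$ is $C^\ell$ on a neighbourhood of $\id_M$, whereas the proposition asserts this on all of $\Diff^{k+\ell}_{\face}(M)$; you cannot globalize by translations, since left translation by a fixed element of merely $C^{k+\ell}$-regularity is not $C^1$ on $\Diff^{k+\ell}_{\face}(M)$ (it loses a derivative). Running the IFT at an arbitrary solution $(f_0,g_0)$ would instead require showing that $\partial_2F(f_0,g_0)$, which is multiplication by $\phi_0'\circ\phi_0^{-1}$ for $\phi_0=\id_M+f_0$, is an automorphism of $C^k_{\str}(M,E)$; this needs the invertibility of $\phi_0'(x)$ up to the boundary and the invariance of the stratification under $\phi_0'$ (Lemma~\ref{face-into-face}\,(c)), none of which appears in your proposal. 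Second, for $\ell=0$ the claim is continuity of inversion, which the IFT does not provide and which you do not prove. Third, for $k=\infty$ or $\ell=\infty$ the model space is Fr\'{e}chet; your bootstrap ``differentiate the identity $c(\phi,\iota(\phi))=\id_M$'' presupposes the differentiability of $\iota$ that is to be established, and the asserted direct verification of continuity is exactly one of the nontrivial points. The paper's route avoids all of this: by \cite[Theorem 3.25]{Alz} it suffices that $(\phi,z)\mapsto\phi^{-1}(z)$ be $C^{\ell,k}$, and this is supplied by Lemma~\ref{GN-thmC} (the parameter-dependent inverse result from \cite{GN2}), which handles $\ell=0$ and the Fr\'{e}chet cases uniformly. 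As it stands, your proposal is a plausible programme rather than a proof.
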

To establish Theorem~\ref{main-thm}, we shall give
$\Diff(M)$
a smooth Lie group structure modeled
on $C^\infty_{\str}(M,E)$
which turns $\Diff^\infty_{\face}(M)$
into an open subgroup.
\begin{rem}
The differentiability properties of the mappings $c_{k,\ell}$
and $\iota_{k,\ell}$ established in Proposition~\ref{main-prop}
show that $\Diff^\infty_{\face}(M)$
fits into the framework of \cite{BaH}.
Thus $\Diff^\infty_{\face}(M)$ (and hence
also $\Diff(M)$)
is an $L^1$-regular Lie group
in the sense of~\cite{MRE}, by~\cite{GSU}. Notably, the Fr\'{e}chet-Lie
group $\Diff(M)$
is $C^0$-regular (as in~\cite{SER})
and hence regular
in the sense of~\cite{Mil}.
\end{rem}
\begin{rem}
The easiest -- but most relevant -- case of our construction
is the case of a cube $M:=[0,1]^n\sub\R^n$.
The corresponding diffeomorphism group is of interest
in numerical mathematics~\cite{CGS}.
We mention that $[0,1]^n$ is a prime
example of a smooth manifold with corners
(as in \cite{Cer,Dou,Mic}; cf.\ \cite{MRO}
for infinite-dimensional generalizations).
Michor~\cite{Mic} discussed the diffeomorphism
group of a paracompact, smooth manifold~$M$
with corners, but his arguments contain a serious flaw:
Contrary to claims in the book,
local additions in Michor's sense
never exist if $\partial M\not=\emptyset$
(not even for $M=[0,\infty[$
or $M=[0,1]$), as we show in an appendix.
Yet, Michor's conclusion is correct
that $\Diff(M)$ is a Lie group.
More generally, $\Diff(M)$
can be made a Lie group for each
paracompact, locally polyhedral $C^\infty$-manifold~$M$
(as in Remark~\ref{polyhedral-mfd});
details will be given elsewhere.
\end{rem}
As before, let $E$ be a finite-dimensional real
vector space.
If $M\sub E$
is any compact convex subset with non-empty interior,
then a Lie group structure
can be constructed on the group $\Diff_{\partial M}(M)$
of all $C^\infty$-diffeomorphisms $\phi\colon M\to M$
such that $\phi(x)=x$ for all $x\in\partial M$
(see \cite{GN2}). In the case that~$M$
is a polytope, our discussion of the Lie group
structure on $\Diff_{\face}^\infty(M)$
proceeds along similar lines.
Then $\Diff_{\partial M}(M)$
is a closed normal subgroup and smooth submanifold
of both $\Diff_{\face}^\infty(M)$
and $\Diff(M)$ (see Remark~\ref{boundary-fix}).
% also SPLIT Lie subgroup?
For numerical applications, it
is useful to know a lower bound
for the $0$-neighbourhoods $\Omega_k\sub C^k_{\str}(M,E)$.
Fixing any norm $\|\cdot\|$ on $E$
to calculate operator norms,
we show:
\begin{prop}\label{prop-2}
Let $M\sub E$ be a convex polytope
with non-empty interior
and $k\in\N\cup\{\infty\}$.
Let $U_k$ be the set of all
$f\in C^k_{\str}(M,E)$
such that
\[
\|f\|_{\infty,\op}:=\sup_{x\in M}\|f'(x)\|_{\op}<1.
\]
Then $U_k$ is an open $0$-neighbourhood in $C^k_{\str}(M,E)$
and $U_k\sub\Omega_k$.
\end{prop}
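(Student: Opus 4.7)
First I would observe that $f\mapsto \sup_{x\in M}\|f'(x)\|_{\op}$ is one of the continuous seminorms determining the compact-open $C^k$-topology on $C^k(M,E)$ (recall $k\ge 1$); its restriction to $C^k_{\str}(M,E)$ is thus continuous and vanishes at~$0$, so $U_k$ is an open $0$-neighbourhood. For the inclusion $U_k\sub\Omega_k$ I would argue by induction on $n\coloneq\dim E$, with $n=0$ trivial. Given $f\in U_k$ with $L\coloneq\|f\|_{\infty,\op}<1$, set $\phi\coloneq\id_M+f$. The Neumann series shows that $\phi'(x)=\id_E+f'(x)$ is invertible at every $x\in M$, and the fundamental theorem of calculus on the convex set~$M$ yields $(1-L)\|x-y\|\le\|\phi(x)-\phi(y)\|\le(1+L)\|x-y\|$, so $\phi$ is injective on~$M$.

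\textbf{Restriction to facets.} For any facet~$F$ of~$M$ with parallel subspace $V_F$ and $x\in F$, the smallest face $M(x)$ lies in~$F$, so $E(x)\sub V_F$ and hence $f(x)\in V_F$. Thus $f|_F$ is a stratified $C^k$-vector field on the polytope~$F\sub\aff(F)$, whose combinatorial face structure coincides with the relative one inherited from~$M$; the restricted derivative satisfies $\|(f|_F)'(x)\|_{\op}\le L<1$. By the inductive hypothesis, $\phi|_F=\id_F+f|_F\in\Diff^k_\face(F)$. Ranging over all facets yields $\phi(\partial M)=\partial M$, and applying induction face by face gives $\phi(G)=G$ for every face~$G$ of~$M$.

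\textbf{Global containment via homotopy.} The main obstacle is upgrading this boundary information to $\phi(M)\sub M$. I would proceed by connecting $\phi$ to $\id_M$ through the family $\phi_t\coloneq \id_M+tf$ for $t\in[0,1]$, each of which lies in~$U_k$; by what has just been shown, $\phi_t$ is injective on~$M$ with $\phi_t(\partial M)=\partial M$, so $\phi_t(M^0)\cap\partial M=\emptyset$. Fix $x\in M^0$: the continuous path $t\mapsto\phi_t(x)$ starts at~$x$ and stays in $E\setminus\partial M$. Because $M^0=M\cap(E\setminus\partial M)$ is open, connected, and relatively closed in $E\setminus\partial M$, it is a connected component of $E\setminus\partial M$; so the path stays in~$M^0$, and $\phi_1(x)=\phi(x)\in M^0$. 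Combined with $\phi(\partial M)=\partial M$ this gives $\phi(M)\sub M$.

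\textbf{Diffeomorphism property.} To finish I would $C^k$-extend $f$ to an open neighbourhood of~$M$ in~$E$; the inverse function theorem then makes $\phi$ a local $C^k$-diffeomorphism at every point of~$M$, so $\phi(M^0)$ is open in~$M^0$. It is also closed in $M^0$: if $\phi(x_n)\to y\in M^0$ with $x_n\in M^0$, a subsequence satisfies $x_n\to x\in M$ and continuity gives $\phi(x)=y\in M^0$; since $\phi(\partial M)=\partial M$, we must have $x\in M^0$, so $y\in\phi(M^0)$. Connectedness of~$M^0$ then forces $\phi(M^0)=M^0$, giving $\phi(M)=M$. A continuous bijection of the compact Hausdorff space~$M$ is a homeomorphism, and the local $C^k$-invertibility of~$\phi$ passes to~$\phi^{-1}$. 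Thus $\phi\in\Diff^k_\face(M)$ and $f=\phi-\id_M\in\Omega_k$, completing the induction.
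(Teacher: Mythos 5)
Your proof is correct, and while it follows the paper's overall skeleton (openness via the seminorm $f\mapsto\sup_{x\in M}\|f'(x)\|_{\op}$, induction on $\dim E$, and applying the inductive hypothesis to the restriction of $f$ to proper faces to get $\phi(G)=G$ for every proper face $G$), it deviates at two genuine points. First, for $\phi(M)\subseteq M$ the paper picks a separating functional $\lambda$ with $\lambda(M)\leq a<\lambda(\phi(x))$, maximizes $\lambda\circ\phi$, and derives a contradiction from the openness of $\phi(M^0)$ (obtained from the Lipschitz inverse function theorem, Lemma~\ref{lip-inv}); you instead run the homotopy $\phi_t=\id_M+tf$, note that each $tf\in U_k$ so that $\phi_t$ is injective with $\phi_t(\partial M)=\partial M$, and use that $M^0$ is a connected component of $E\setminus\partial M$ to trap the path $t\mapsto\phi_t(x)$ in $M^0$ --- a clean topological argument that avoids both the separating hyperplane and the openness statement from~\cite{IMP} at this stage. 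Second, for the $C^k$-property of $\phi^{-1}$ up to the boundary, the paper bootstraps: it shows $(\phi^{-1})'=j\circ\phi'\circ\phi^{-1}$ extends continuously to $M$ and then climbs from $C^{\ell-1}$ to $C^\ell$ using Lemma~\ref{fin-dim-calc}, never leaving~$M$; you instead invoke a $C^k$-extension of $f$ to an open neighbourhood of $M$ in $E$ and apply the classical inverse function theorem at boundary points, identifying $\phi^{-1}$ locally with the restriction of the inverse of the extension. That extension step is the one place where you lean on a nontrivial external fact (a Whitney-type extension theorem for $C^k$-maps, in the sense of continuously extended differentials, on a compact convex set with dense interior); it is true for polytopes in finite-dimensional $E$ and could be cited, but it deserves explicit justification, and the paper's formula-plus-induction argument shows how to avoid it entirely. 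With that reference supplied, your argument is a complete and somewhat more geometric alternative to Steps 2--4 of the paper's proof.
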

For $E=\R^n$ and $M=[0,1]^n$, see
already~\cite{CGS}.
An analogous result for $\Diff_{\partial M}(M)$
was also established in~\cite{CGS},
for compact convex
subsets $M \sub \R^n$
with non-empty~interior.
\begin{rem}
Note that $\|f\|_{\infty,\op}$
is the smallest Lipschitz constant $\Lip(f)$
for~$f$
in the situation of Proposition~\ref{prop-2}
(see, e.g., \cite[Lemma~1.5.3\,(c)]{GaN}).
\end{rem}
\section{Preliminaries and basic facts}
We write $\N=\{1,2,\ldots\}$
and $\N_0:=\N\cup\{0\}$.
The term ``locally convex space''
means a locally convex, Hausdorff
topological vector space over the ground field~$\R$.
If $(E,\|\cdot\|)$
is a normed space, let
$B^E_r(x):=\{y\in E\colon \|y-x\|<r\}$
and $\wb{B}^E_r(x):=\{y\in E\colon \|y-x\|\leq r\}$
for $x\in E$ and $r>0$.
For background concerning
convex polytopes
and their faces,
the reader is referred to \cite{Bro}
and \cite{Web}.
If $E$ is a real vector space and
$S\sub E$ a subset,
we write $\aff(S)$ for the affine
subspace of~$E$ generated by~$S$.
In this article,
we are working in the setting of infinite-dimensional
calculus known as Keller's $C^k_c$-theory
(going back to \cite{Bas}).
Accordingly, the smooth manifolds and Lie groups
we consider are modeled on locally convex
spaces which need not have finite dimension;
see \cite{RES}, \cite{GaN},
and \cite{NSU}
for further information.\\[2.3mm]
Let $E$ and $F$ be locally convex spaces.
We recall from~\cite[Chapter~1]{GaN}:
\begin{numba}
A map $f\colon U\to E$
on an open subset $U\sub E$
is called $C^1$ if $f$ is continuous,
the directional derivative
\[
df(x,y):=(D_yf)(x):=\lim_{t\to 0}\frac{1}{t}\left( f(x+ty)-f(x)\right)
\]
exists in~$F$ for all $(x,y)\in U\times E$,
and the map $df\colon U\times E\to F$
is continuous.
\end{numba}
Then $f'(x):=df(x,\cdot)\colon E\to F$ is linear for each $x\in U$.
\begin{numba}
Let $U\sub E$ be a subset which is locally convex
(in the sense that each $x\in U$
has a convex neighbourhood in~$U$)
and whose interior
$U^0$ is dense in~$U$.
A map $f\colon U\to F$
is called $C^0$ if $f$ is continuous.
If $f$ is continuous,
$f|_{U^0}$ is $C^1$ and $d(f|_{U^0})\colon U^0\times E\to F$
has a continuous extension
\[
df\colon U\times E\to F,
\]
then $f$ is called~$C^1$.
Given $k\in\N$, we say that
$f$ is $C^k$ if $f$ is $C^1$ and $df$ is $C^{k-1}$.
If $f$ is $C^k$ for all $k\in\N_0$,
then $f$ is called a $C^\infty$-map
or also \emph{smooth}.
\end{numba}
If $E$ and $F$ are locally convex spaces,
we write $L(E,F)_b$
for the locally convex space of all continuous
linear maps $\alpha\colon E\to F$,
endowed with the topology of uniform
convergence on bounded subsets of~$E$.
For finite-dimensional domains,
the $C^k$-property can be reformulated
in the expected way:
\begin{la}\label{fin-dim-calc}
Let $E$ be a finite-dimensional
real vector space, $U\sub E$
be a locally convex subset with dense
interior, $F$ be a locally convex space,
and $k\in \N$.
Then the following conditions are equivalent
for a continuous map $f\colon U\to F$:
\begin{itemize}
\item[\rm(a)]
$f$ is $C^k$;
\item[\rm(b)]
$f$ is $C^1$
and $f'\colon U\to L(E,F)_b$, $x\mto f'(x)=df(x,\cdot)$
is $C^{k-1}$.
\end{itemize}
\end{la}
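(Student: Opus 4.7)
The plan is to reduce everything to finite products of copies of $F$ by exploiting finite-dimensionality of $E$ in two ways: via a basis to identify $L(E,F)_b$ with $F^n$, and via continuity/smoothness of the evaluation map.  Once these identifications are in place, both implications follow by a straightforward induction on~$k$ together with the chain rule.

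For the implication (b)$\Rightarrow$(a), I would argue by induction on~$k$.  Fix a basis $e_1,\ldots,e_n$ of~$E$, which induces a topological linear isomorphism $L(E,F)_b\cong F^n$, $\alpha\mto(\alpha(e_1),\ldots,\alpha(e_n))$ (for finite-dimensional~$E$, the topology of uniform convergence on bounded sets coincides with that of pointwise convergence on the basis).  Via this identification, the evaluation map $\ev\colon L(E,F)_b\times E\to F$ becomes the bilinear map $((v_1,\ldots,v_n),y)\mto\sum_i y_iv_i$, which is continuous, bilinear, and polynomial in finitely many variables, hence smooth.  Since $df=\ev\circ(f'\times\id_E)$ on $U^0\times E$ and both sides extend continuously to $U\times E$, the chain rule shows that $df\colon U\times E\to F$ is $C^{k-1}$.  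By the inductive definition of $C^k$ on locally convex subsets with dense interior, this means $f$ is $C^k$.

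For the converse (a)$\Rightarrow$(b), I would use the same basis $e_1,\ldots,e_n$.  Under the identification $L(E,F)_b\cong F^n$, the map $f'\colon U\to L(E,F)_b$ corresponds to $x\mto(df(x,e_1),\ldots,df(x,e_n))$.  A map into $F^n$ is $C^{k-1}$ iff each of its components is, so it suffices to show that each $x\mto df(x,e_i)$ is $C^{k-1}$.  But if $f$ is $C^k$, then by definition $df\colon U\times E\to F$ is $C^{k-1}$; composing with the smooth embedding $U\to U\times E$, $x\mto(x,e_i)$, gives the desired conclusion via the chain rule.

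The main obstacle is bookkeeping rather than any deep idea: one has to make sure the non-standard definition of $C^k$ on a locally convex subset with dense interior (rather than on an open set) is compatible with each application of the chain rule, particularly in checking that $df$ on $U\times E$ is really the continuous extension of $d(f|_{U^0})$ and that compositions with smooth maps preserve $C^k$-regularity in this generalized sense.  Once this is verified, both directions reduce to the elementary observation that finite-dimensional $E$ makes $L(E,F)_b$ coincide topologically with $F^n$ and makes evaluation smooth.
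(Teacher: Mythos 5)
Your proposal is correct and follows essentially the same route as the paper: both directions use the basis-induced isomorphism $L(E,F)_b\cong F^n$ (resp.\ its componentwise criterion) together with smoothness of the evaluation map and the chain rule in the calculus for locally convex subsets with dense interior. The only cosmetic difference is that the paper argues continuity of $\ve\colon L(E,F)_b\times E\to F$ directly from normability of~$E$ rather than via the explicit formula $\sum_i y_i v_i$ after the identification.
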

\begin{proof}
To see that (a) implies (b),
we may assume that $E\not=\{0\}$, excluding a trivial case.
Let $b_1,\ldots, b_n$ be a basis for~$E$.
Then
\[
\psi\colon L(E,F)_b\to F^n,\quad
\alpha\mto (\alpha(b_1),\ldots, \alpha(b_n))
\]
is an isomorphism of topological vector spaces.
If $f$ is $C^k$, then $f$ is $C^1$ and
\[
\psi\circ f'=(df(\cdot, b_1),\ldots,df(\cdot,b_n))
\]
is $C^{k-1}$ (by \cite[Lemma 1.4.6 and Proposition~1.4.10]{GaN}), entailing that
$f'=\psi^{-1}\circ (\psi\circ f')$
is $C^{k-1}$.

Conversely, assume that (b) holds.
The projections
$\pr_1\colon U\times E\to U$, $(x,y)\mto x$
and $\pr_2\colon U\times E\to E$, $(x,y)\mto y$
are smooth, being restrictions
of continuous linear maps $E\times E\to E$.
Since $E$ is finite dimensional and thus normable,
the evaluation map $\ve\colon L(E,F)_b\times E\to F$,
$(\alpha,y)\mto\alpha(y)$ is continuous
and thus smooth, being bilinear.
Hence $df=\ve\circ (f'\circ \pr_1,\pr_2)$
is $C^{k-1}$, using \cite[Lemma~1.4.6 and Proposition 1.4.10]{GaN}.
\end{proof}
If $f\colon X\to Y$
is a function between metric spaces $(X,d_X)$ and $(Y,d_Y)$,
we define
\[
\Lip(f):=\sup\left\{\frac{d_Y(f(x),f(y))}{d_X(x,y)}\colon \mbox{$x,y\in X$ with $x\not=y$}\right\}\in [0,\infty].
\]
Thus $f$ is Lipschitz continuous if and only if
$\Lip(f)<\infty$, and $\Lip(f)$
is the smallest Lipschitz constant for~$f$ in this
case.
We shall use a version of the Lipschitz Inverse
Function Theorem.
\begin{la}\label{lip-inv}
Let $(E,\|\cdot\|)$
be a normed space, $M\sub E$ a subset
and $f\colon M\to E$ a Lipschitz
continuous mapping
such that $\Lip(f)<1$.
Then $\phi:=\id_M+f\colon \! M\to E$\linebreak
is injective
and $\phi^{-1}\colon \phi(M)\to M$
is Lipschitz continuous with
\begin{equation}\label{Lipinv}
\Lip(\phi^{-1})\;\leq \; \frac{1}{1-\Lip(f)}.
\end{equation}
If, moreover, $(E,\|\cdot\|)$
is a Banach space and~$M$ is open in~$E$,
then $\phi(M)$ is open in~$E$.
\end{la}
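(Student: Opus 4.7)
The plan is to treat the two assertions separately. The injectivity of $\phi$ together with the Lipschitz bound on $\phi^{-1}$ will fall out of the reverse triangle inequality and require no hypothesis on $M$ beyond being a subset of a normed space, while the openness of $\phi(M)$ will be a separate application of the Banach fixed point theorem that needs both completeness of $E$ and openness of~$M$.

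For the first part, set $L:=\Lip(f)<1$ and observe that for all $x,y\in M$,
\[
\|\phi(x)-\phi(y)\|=\|(x-y)+(f(x)-f(y))\|\geq\|x-y\|-\|f(x)-f(y)\|\geq(1-L)\|x-y\|
\]
by the reverse triangle inequality together with the Lipschitz estimate on~$f$. Since $1-L>0$, this immediately gives injectivity of~$\phi$, and rewriting $\|x-y\|=\|\phi^{-1}(\phi(x))-\phi^{-1}(\phi(y))\|$ yields the bound~(\ref{Lipinv}).

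For openness, assume that $(E,\|\cdot\|)$ is Banach and $M$ is open in~$E$. Given $x_0\in M$, pick $r>0$ with $\wb{B}^E_r(x_0)\sub M$. I would show that $\phi\bigl(\wb{B}^E_r(x_0)\bigr)$ contains $\wb{B}^E_{(1-L)r}(\phi(x_0))$, which being a neighbourhood of $\phi(x_0)$ proves openness of $\phi(M)$. To this end, fix such a~$y$ and consider the map $T_y\colon\wb{B}^E_r(x_0)\to E$, $T_y(x):=y-f(x)$, noting that fixed points of~$T_y$ are precisely preimages of~$y$ under~$\phi$. The map $T_y$ is clearly an $L$-contraction, and for $x\in\wb{B}^E_r(x_0)$ the estimate
\[
\|T_y(x)-x_0\|\leq\|y-\phi(x_0)\|+\|f(x_0)-f(x)\|\leq(1-L)r+Lr=r
\]
shows that $T_y$ maps the complete metric space $\wb{B}^E_r(x_0)$ into itself. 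Banach's contraction principle supplies a fixed point $x\in\wb{B}^E_r(x_0)\sub M$ with $\phi(x)=y$.

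The routine step is the first part; the only mild obstacle is in the openness argument, where one has to choose the radii so that $T_y$ stabilizes the closed ball --- this is exactly what the identity $(1-L)r+Lr=r$ achieves, and it is the reason the statement quantifies openness by the factor $1-L$.
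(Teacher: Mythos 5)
Your argument is correct. The first half (injectivity and the bound~(\ref{Lipinv}) via the reverse triangle inequality) is exactly the paper's argument. For the openness of $\phi(M)$ the routes differ: the paper does not argue directly but simply invokes an external result (Theorem~5.3 of the cited preprint \cite{IMP}, a fixed-point/open-mapping theorem over valued fields) to conclude that $\phi$ is an open map, whereas you give a self-contained proof via Banach's contraction principle, showing $\wb{B}^E_{(1-L)r}(\phi(x_0))\sub \phi\bigl(\wb{B}^E_r(x_0)\bigr)$ by finding, for each such $y$, a fixed point of the $L$-contraction $T_y(x)=y-f(x)$ on the complete set $\wb{B}^E_r(x_0)\sub M$. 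This is the classical proof of the Lipschitz inverse function theorem; your version buys self-containedness and an explicit quantitative radius $(1-L)r$ for the image ball (which the paper's citation hides), at the cost of a few extra lines, and all the small checks (that fixed points of $T_y$ are $\phi$-preimages of $y$, that $T_y$ stabilizes the closed ball via $(1-L)r+Lr=r$, and that a closed ball in a Banach space is complete) are carried out correctly.
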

\begin{proof}
For all $x,y\in M$,
\[
\|\phi(x)-\phi(y)\|\geq \|x-y\|-\|f(x)-f(y)\|
\geq (1-\Lip(f))\|x-y\|.
\]
Thus $\phi(x)=\phi(y)$ entails $x=y$,
and also (\ref{Lipinv}) follows.
If $(E,\|\cdot\|)$
is a Banach space and $M$
is open in~$E$, then $\phi$
is an open map, as a consequence of \cite[Theorem~5.3]{IMP};
notably, $\phi(M)$ is open.
\end{proof}
\begin{numba}
Let $E_1$, $E_2$, and $F$ be locally convex spaces,
$U_1\sub E_1$ as well as $U_2\sub E_2$
be locally convex subsets with dense interior,
and $r,s\in \N_0\cup\{\infty\}$.
Following~\cite{AaS},
we say that a map $f\colon U_1\times U_2\to F$
is $C^{r,s}$
if $f$ is continuous and, for all $i,j\in \N_0$
with $i\leq r$ and $j\leq s$, there exists a continuous map
\[
d^{(i,j)}f\colon U_1\times U_2\times (E_1)^i\times (E_2)^j\to F
\]
such that the iterated directional derivatives
\[
(D_{(v_i,0)}\cdots D_{(v_1,0)}D_{(0,w_j)}\cdots D_{(0,w_1)}f)(x,y)
\]
exist for all $x\in U_1^0$, $y\in U_2^0$,
$v_1,\ldots, v_i\in E_1$, $w_1,\ldots,w_j\in E_2$,
and coincide with $d^{(i,j)}f(x,y,v_1,\ldots, v_i,w_1,\ldots, w_j)$,
using the interiors $U_1^0$ and $U_2^0$ (see also \cite{GaN}).
\end{numba}
We mention that $C^{r,s,t}$-maps $f\colon U_1\times U_2\times U_3\to F$
can be defined analogously~\cite{Alz}.
\section{Proof of Proposition~\ref{prop-2}}
By definition of the compact-open $C^k$-topology,
the map
\[
C^k_{\str}(M,E)\to C(M\times E,E), \quad f\mto df
\]
is continuous if we endow $C(M\times E,E)$
with the compact-open topology. Hence
\[
U_k=\{f\in C^k_{\str}(M,E)\colon df(M\times \wb{B}^E_1(0))\sub B^E_1(0)\}
\]
is open in $C^k_{\str}(M,E)$. Moreover, $0\in U_k$.
We prove the remaining assertion
of Proposition~\ref{prop-2}
by induction on the dimension of~$E$.
The case $\dim(E)=0$
is trivial, since $f=0$
for all $f\in C^k_{\str}(M,E)$
in this case and thus $\id_M+f=\id_M\in\Diff^k_{\face}(M)$.
Let $\dim(E)>0$ now and assume
that the assertion has been
established for all
finite-dimensional
vector spaces of dimension $<\dim(E)$,
in place of~$E$.
Given $f\in U_k$, our goal
is to show that $\phi:=\id_M+f$
is a face-respecting $C^k$-diffeomorphism
of~$M$.
As a first step,
we show that $\phi(F)=F$
for each face $F\not=M$
of~$M$. Pick an element $x$
in the algebraic interior $\algint(F)$
of~$F$
(called the ``relative interior''
in \cite{Bro,Web}).
Then $F=M(x)$; we endow
\[
E(x)=\Spann_\R(F-x)
\]
with the norm induced by $\|\cdot\|$.
For each $z\in\algint(F)$,
we have $f(z)\in E(z)=\aff(F)-z=\aff(F)-x=E(x)$.
Then $f(z)\in E(x)$ for all $z\in F$,
as $f$ is continuous and $\algint(F)$ is dense in~$F$.
Since $F\not=M$, we have $\dim(E(x))<\dim(E)$.
Now $N:=F-x$ is a polytope
in $E(x)$, with non-empty interior relative~$E(x)$.
Moreover, the map
\[
g\colon N\to E(x),\quad y\mto f(x+y)
\]
is $C^k$ (using \cite[Lemma~1.4.16]{GaN})
and Lipschitz with $\Lip(g)\leq 1$.
If $y\in N$,
let $N(y)$ be the smallest face
of~$N$ such that $y\in N(y)$.
Then $x+N(y)$ is the smallest face of~$F$
which contains $x+y$, and hence
also the smallest face $M(x+y)$
of~$M$ which contains $x+y$.
As a consequence,
\begin{eqnarray*}
g(y)=f(x+y)\in E(x+y)\! & \! = \! &\! \Spann_\R(M(x+y)-(x+y))\\
\! &\! =\! &\! \Spann_\R(x+N(y)-(x+y))=\Spann_\R(N(y)-y)
\end{eqnarray*}
holds, showing that $g\in C^k_{\str}(N,E(x))$.
By the inductive hypothesis,
we have $(\id_N+g)(N)=N$.
Since
$(\id_M+f)(z)=z+f(z)=x+(z-x)+f(x+(z-x))=x+(\id_N+g)(z-x)$
for all $z\in F$,
we deduce that
\[
(\id_M+f)(F)=x+(\id_N+g)(F-x)=x+(\id_N+g)(N)=x+N=F,
\]
concluding the first step of the proof.
As the second step, we prove $\phi(M)\sub M$.
If this was false,
we could find $x\in M$
such that $\phi(x)\not\in M$.
Now~$M$ being an intersection of half-spaces,
we would find a linear functional $\lambda\colon E\to\R$
and $a\in \R$ such that
\[
\lambda(M)\sub \;]{-\infty},a]\quad
\mbox{and}\quad \lambda(\phi(x))>a.
\]
Then $\lambda\not=0$,
entailing that $\lambda$ is an open map.
Since~$M$ is compact and $\lambda\circ \phi$ is continuous,
after replacing~$x$ if necessary we may assume that
\[
\lambda(\phi(x))=\max \lambda(\phi(M)).
\]
Since $\phi(F)=F\sub M$
for each proper face~$F$
of~$M$ by the first step, we must
have $x\in M^0$.
Since $\phi(M^0)$ is open in~$E$
by Lemma~\ref{lip-inv},
$\lambda(\phi(M^0))$
is an open neighbourhood of $\lambda(\phi(x))$
in~$\R$, contradicting the maximailty
of $\lambda(\phi(x))$. Hence~$x$ cannot exist
and $\phi(M)\sub M$ must hold.\\[2.3mm]
As the third step,
we show that $\phi(M)=M$.
Since $\partial M$
is the union of all proper faces~$F$
of~$M$ and $\phi(F)=F$ by Step~1,
we have $\phi(\partial M)=\partial M$.
Now $M$ being closed in~$E$,
we have $M=M^0\cup\partial M$
with $M^0\cap \partial M=\emptyset$.
We already observed that~$\phi(M^0)$ is open in~$E$,
by Lemma~\ref{lip-inv};
thus $\phi(M^0)\sub M^0$.
Since $\phi(M)$ is compact and hence closed
in~$M$, the intersection
\[
\phi(M)\cap M^0=(\phi(M^0)\cup \phi(\partial M))\cap M^0=\phi(M^0)
\]
is closed in~$M^0$.
But $\phi(M^0)$ is also open in $E$,
and hence open in~$M^0$.
Since $M^0$ is connected
and $\phi(M^0)\not=\emptyset$,
we deduce that $M^0=\phi(M^0)$.
As a consequence, $\phi(M)=\phi(M^0\cup\partial M)=M^0\cup\partial M=M$.\\[2.3mm]
By Lemma~\ref{lip-inv}, $\phi\colon M\to M$
is a homeomorphism. By Steps~1 and~3, $\phi(F)=F$
for each face~$F$ of~$M$,
whence~$\phi$ is face respecting.\\[2.3mm]
The inversion map $j\colon \GL(E)\to\GL(E)$, $\alpha\mto \alpha^{-1}$
is smooth on the general linear group
$\GL(E):=L(E,E)^\times$.
For each $x\in M^0$,
we have $\phi'(x)-\id_E=f'(x)$
with $\|f'(x)\|_{\op}<1$,
whence $\phi'(x)\colon E\to E$
is invertible (by means of Neumann's series).
Thus $\phi|_{M^0}$ is a local $C^k$-diffeomorphism at~$x$,
by the Inverse Function Theorem.
As a consequence, the bijection $\phi|_{M^0}\colon M^0\to M^0$
is a $C^k$-diffeomorphism.
Now $\phi^{-1}\colon M\to E$ is a continuous map
and $\phi^{-1}|_{M^0}=(\phi|_{M^0})^{-1}$ is $C^1$
with
\[
(\phi^{-1}|_{M^0})'=((\phi|_{M^0})^{-1})'=j\circ (\phi|_{M_0})'\circ (\phi|_{M^0})^{-1}.
\]
By the preceding, the map
\[
g:=j\circ \phi'\circ \phi^{-1}\colon M\to L(E,E)_b
\]
is a continuous extension of $(\phi^{-1}|_{M^0})'\colon M^0\to L(E,E)_b$.
Then $g^\wedge\colon U\times E\to E$,
$(x,y)\mto g(x)(y)=\ve(g(x),y)$ is a continuous extension of
$d(\phi^{-1}|_{M^0})$,
using that the evaluation map
$\ve\colon
L(E,E)_b\times E\to E$
is continuous.
Hence $\phi^{-1}$ is $C^1$ with $d(\phi^{-1})=g$ and
thus
\begin{equation}\label{make-indu}
(\phi^{-1})'=j\circ \phi'\circ \phi^{-1}.
\end{equation}
By induction on $\ell\in \N$
with $\ell\leq k$, we may assume that
$\phi^{-1}$ is $C^{\ell-1}$.
Then $(\phi^{-1})'$ is $C^{\ell-1}$, by (\ref{make-indu}),
whence $\phi^{-1}$ is $C^\ell$, by Lemma~\ref{fin-dim-calc}.
Thus $\phi^{-1}$ is $C^k$,
entailing that $\phi \in \Diff^k_{\face}(M)$
and hence $f=\phi-\id_M\in\Omega_k$.
Thus $U_k\sub \Omega_k$.
\section{Proof of Proposition~\ref{main-prop}}
We first show that $\Omega_k$ is open.
\begin{la}\label{omega-open}
\hspace*{-.3mm}For each $k\in\N\cup\{\infty\}$,
the set
$\Omega_k:=\{\phi-\id_M\colon \phi\in \Diff^k_{\face}(M)\}$
is an open $0$-neighbourhood in~$C^k_{\str}(M,E)$.
\end{la}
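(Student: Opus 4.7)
The plan is to use Proposition~\ref{prop-2} to deal with the origin and then translate to an arbitrary point of $\Omega_k$ by composing on the right with a fixed face-respecting diffeomorphism. By Proposition~\ref{prop-2}, the open set $U_k$ is contained in $\Omega_k$, so $0$ is an interior point of $\Omega_k$; the task is to show that each $\phi_0\in \Omega_k$ is interior as well.

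Fix $\phi_0\in \Omega_k$ and set $\phi:=\id_M+\phi_0\in \Diff^k_{\face}(M)$. Since $\phi$ is a bijection of $M$ fixing each face setwise, the same is true of $\phi^{-1}$, i.e.\ $\phi^{-1}\in \Diff^k_{\face}(M)$. Define
\[
\Psi\colon C^k_{\str}(M,E)\to C^k(M,E),\quad g\mto (\id_M+g)\circ\phi^{-1}-\id_M.
\]
First I would verify that $\Psi$ takes values in $C^k_{\str}(M,E)$. For $x\in M$, the face-respecting property of $\phi^{-1}$ gives $\phi^{-1}(x)\in M(x)$, so $\phi^{-1}(x)-x\in M(x)-x\sub E(x)$. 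Moreover $M(\phi^{-1}(x))\sub M(x)$ (as $M(\phi^{-1}(x))$ is the \emph{smallest} face containing $\phi^{-1}(x)$), and since $\aff(M(x))-\phi^{-1}(x)=E(x)$ one obtains $E(\phi^{-1}(x))\sub E(x)$, so that $g(\phi^{-1}(x))\in E(\phi^{-1}(x))\sub E(x)$. Adding, $\Psi(g)(x)\in E(x)$, as required.

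Next I would observe that $\Psi$ is continuous: the translation $g\mto \id_M+g$ and its inverse are homeomorphisms of $C^k(M,E)$, and right composition $h\mto h\circ\phi^{-1}$ is a continuous linear endomorphism of $C^k(M,E)$ endowed with the compact-open $C^k$-topology (a standard consequence of the Faà~di~Bruno formula and the compactness of $M$; cf.\ the composition results in \cite{GaN}). Since $\Psi(\phi_0)=\phi\circ\phi^{-1}-\id_M=0$ lies in the open set $U_k$, the preimage $V:=\Psi^{-1}(U_k)$ is an open neighbourhood of $\phi_0$ in $C^k_{\str}(M,E)$. For $g\in V$, Proposition~\ref{prop-2} gives $\id_M+\Psi(g)\in \Diff^k_{\face}(M)$, and a direct calculation yields
\[
(\id_M+\Psi(g))\circ\phi = \id_M+g,
\]
displaying $\id_M+g$ as a composition of two face-respecting $C^k$-diffeomorphisms; hence $\id_M+g\in \Diff^k_{\face}(M)$ and $g\in \Omega_k$. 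Thus $V\sub \Omega_k$ and $\phi_0$ is interior.

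The main obstacle is the bookkeeping that verifies $\Psi$ lands in $C^k_{\str}(M,E)$, in particular the inclusion $E(\phi^{-1}(x))\sub E(x)$; this relies on the face-respecting property of $\phi^{-1}$ and elementary facts about affine hulls of faces. The remaining ingredients---continuity of right composition in the compact-open $C^k$-topology and the openness of $U_k$---are either standard or already established in Proposition~\ref{prop-2}.
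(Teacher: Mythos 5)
Your argument is correct and follows essentially the same route as the paper: both obtain the $0$-neighbourhood from Proposition~\ref{prop-2} and transport it to an arbitrary point of $\Omega_k$ by the right-composition operator $g\mto(\id_M+g)\circ\phi^{-1}-\id_M$ (continuous by \cite[Proposition~1.7.11]{GaN}), checking via the face-respecting property that it preserves stratified vector fields; the paper merely packages this operator as a homeomorphism $\rho_g$ of $C^k_{\str}(M,E)$ and pushes $\Omega_k$ forward, whereas you only need its continuity and pull back $U_k$. The sole step left implicit is the one-line verification that $\Omega_k\sub C^k_{\str}(M,E)$ in the first place (i.e.\ $\phi(x)-x\in M(x)-x\sub E(x)$ for face-respecting $\phi$), which is the same computation you already carry out for $\Psi$.
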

\begin{proof}
If $\phi\in\Diff^k_{\face}(M)$,
then $\phi-\id_M\colon M\to E$
is a $C^k$-map.
For each $x\in M$, we have
$\phi(x)-x\in\phi(M(x))-x=M(x)-x\sub E(x)$,
showing that $\phi-\id_M$ is a stratified
vector field and thus $\phi-\id_M\in C^k_{\str}(M,E)$.
By Proposition~\ref{prop-2}, $\Omega_k$
is a $0$-neighbourhood in $C^k_{\str}(M,E)$.
It remains to show that $\Omega_k$ is open.
Let $V:=C^k(M,E)$.
For each $g\in \Omega_k$, the map
\[
R_g\colon C^k(M,E)\to C^k(M,E),\quad f\mto f\circ (\id_M+ g)
\]
is continuous linear, by \cite[Proposition~1.7.11]{GaN}.
Now $h:=(\id_M + g)^{-1}-\id_M\in
\Omega_k$.
Since $\id_M+h=(\id_M+g)^{-1}$,
we see that $R_h\circ R_g=R_g\circ R_h=\id_V$.
Thus~$R_g$ is an isomorphism
of topological vector spaces,
with $(R_g)^{-1}=R_h$.
Also
\[
\tau\colon C^k(M,E)\to C^k(M,E),\quad f\mto \id_M+f
\]
is a homeomorphism. We deduce that
\[
r_g:=\tau^{-1}\circ R_g\circ \tau \colon C^k(M,E)\to C^k(M,E),\quad
f\mto g+f\circ (\id_M+g)
\]
is a homeomorphism with $r_g^{-1}=
\tau^{-1}\circ R_h\circ \tau=r_h$.\vspace{-.3mm}
Using that $\phi:=
\id_M+g\in \Diff^k_{\face}(M)$
is face respecting,
we now show that
\begin{equation}\label{compo-is-str}
r_g(f)\in C^k_{\str}(M,E)\quad\mbox{for each $\, f\in C^k_{\str}(M,E)$.}
\end{equation}
To this end, let $f\in C^k_{\str}(M,E)$.
For $x\in M$,
the image $\phi(M(x))=M(x)$
is a face containing the element $\phi(x)$,
whence $M(\phi(x))\sub M(x)$.
Replacing~$x$ with~$\phi(x)$ and~$\phi$ with its inverse,
the same argument shows that
$M(x)=M(\phi^{-1}(\phi(x)))\sub M(\phi(x))$.
Thus $\phi(M(x))=M(\phi(x))$.
As a consequence, $\aff M(x)=\aff M(\phi(x))$ and hence
\[
E(x)=(\aff M(x))-x=(\aff M(\phi(x)))-\phi(x)=E(\phi(x)).
\]
Now $r_g(f)(x)=g(x)+f(x+g(x))=g(x)+f(\phi(x))\in E(x)+E(\phi(x))=E(x)$,
establishing~(\ref{compo-is-str}).
By~(\ref{compo-is-str}), the map $r_g$
restricts to a self-map
\[
\rho_g\colon C^k_{\str}(M,E)\to C^k_{\str}(M,E)
\]
which is continuous as we endowed $W:=C^k_{\str}(M,E)$
with the topology
induced by $C^k(M,E)$.
As $\rho_g\circ\rho_h=\rho_h\circ \rho_g=\id_W$,
we deduce that $\rho_g$
is a homeomorphism
with $(\rho_g)^{-1}=\rho_h$.
We claim that
\[
\rho_g(\Omega_k)\sub \Omega_k.
\]
If this is true, then $\Omega_k$
is a $g$-neighbourhood in $C^k_{\str}(M,E)$
(which completes
the proof),
as $\Omega_k$ is a $0$-neighbourhood,
$\rho_g$ a homeomorphism,
and $\rho_g(0)=g$.\linebreak
To establish the claim,
let $f\in \Omega_k$.
Then $\id_M+f\in \Diff^k_{\face}(M)$
and hence $(\id_M+f)\circ (\id_M+g)\in \Diff^k_{\face}(M)$,
the latter being closed under composition.
Thus
$\rho_g(f)=g+f\circ (\id_M+g)=(\id_M+f)\circ (\id_M+g)-\id_M\in \Omega_k$.
\end{proof}
We write $\Diff^r(K):=\{\phi\in C^r(K,K)\colon(\exists \psi\in C^r(K,K))\;
\phi\circ\psi=\psi\circ\phi=\id_K\}$
in the next lemma.
\begin{la}\label{GN-thmC}
Let $F$ be a locally convex space,
$U\sub F$ be a locally convex subset with dense
interior, $E$ be a finite-dimensional
real vector space,
$K\sub E$ be a compact convex subset
with non-empty interior and $r\in\N_0\cup\{\infty\}$.
If a map $f\colon U\times K\to K\sub E$
is~$C^r$ and $f_x:=f(x,\cdot)\in \Diff^r(K)$
for all $x\in U$, then also the following map is~$C^r$:
\[
g\colon U\times K\to K,\quad (x,z)\mto (f_x)^{-1}(z).
\]
\end{la}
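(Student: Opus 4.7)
The plan is to prove the lemma by induction on $r$, treating continuity ($r=0$) as the base case and then establishing higher differentiability by combining the parameter-dependent implicit function theorem on the open set $U^0\times K^0$ with continuous extension of the derivative formulas to $U\times K$, invoking the paper's definition of $C^r$-maps on locally convex subsets with dense interior.

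For continuity, I would use a standard compactness argument. Fix $(x_0,z_0)\in U\times K$, set $y_0\coloneq g(x_0,z_0)$, and for $\ve>0$ put $A_\ve\coloneq\{y\in K\colon \|y-y_0\|\geq\ve\}$. Injectivity of $f_{x_0}$ on the compact set $A_\ve$ forces $\delta\coloneq \inf_{y\in A_\ve}\|f(x_0,y)-z_0\|>0$. Joint continuity of $f$ combined with compactness of $K$ then yields a neighbourhood $V$ of $x_0$ in $U$ with $\sup_{y\in K}\|f(x,y)-f(x_0,y)\|<\delta/2$ for $x\in V$; the identity $f(x,g(x,z))=z$ forces $g(x,z)\notin A_\ve$ whenever $x\in V$ and $\|z-z_0\|<\delta/2$, giving continuity of $g$ at $(x_0,z_0)$.

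For $r\geq 1$ two preliminary observations are needed. First, differentiating $f_x\circ f_x^{-1}=\id_K$ shows that $D_2 f(x,z)\in\GL(E)$ for every $(x,z)\in U\times K$, the invertibility at boundary points following from continuity of $D_2 f$. Second, a compact convex subset of $E$ with non-empty interior is homeomorphic to a closed ball, so by invariance of domain each homeomorphism $f_x$ preserves $K^0$ and $\partial K$; in particular $f|_{U^0\times K^0}$ takes values in $K^0$. Applying the parameter-dependent implicit function theorem in Keller's $C^k_c$-framework (available because the fibre variable lies in the finite-dimensional space $E$) to $f|_{U^0\times K^0}$, one obtains that $g|_{U^0\times K^0}$ is $C^r$ with
\begin{align*}
D_2 g(x,z) &= j(D_2 f(x,g(x,z))),\\
D_1 g(x,z) &= -j(D_2 f(x,g(x,z)))\circ D_1 f(x,g(x,z)),
\end{align*}
where $j\colon\GL(E)\to\GL(E)$ denotes the smooth inversion map. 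Since $g$, $D_1 f$, $D_2 f$ are continuous on $U\times K$ and $j$ is continuous on $\GL(E)$, these formulas define continuous extensions of $D_1 g$ and $D_2 g$ to $U\times K$; by the paper's definition of $C^1$-maps on locally convex subsets with dense interior, $g$ is $C^1$ on $U\times K$. The induction step for $r\geq 2$ is then immediate: assuming $g$ is $C^{r-1}$, the displayed formulas exhibit $dg$ as a composition of $C^{r-1}$-maps (using that $D_1 f$ and $D_2 f$ are $C^{r-1}$ and that $j$ is smooth), whence $dg$ is $C^{r-1}$ and therefore $g$ is $C^r$.

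The main obstacle is the invocation of the parameter-dependent implicit function theorem on $U^0\times K^0$: one needs an IFT valid for $C^r$-maps between open subsets of $F\times E$ and $E$ in the $C^k_c$-calculus, producing a $C^r$ local fibre inverse under invertibility of $D_2 f$. This is a standard result precisely because the fibre variable lies in the finite-dimensional space $E$, so that no Banach hypothesis on the parameter space $F$ is required; once this is in hand, the passage to the boundary via continuous extension of the derivative formulas is essentially bookkeeping.
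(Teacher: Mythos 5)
Your overall strategy is sound and, apart from one step whose stated justification fails, it is essentially the standard argument. The paper itself only treats $r=0$ directly (via a closed-graph-plus-compactness argument, equivalent in spirit to your $\ve$--$\delta$ argument, citing \cite[Lemma~2.1]{GN2}) and for $r\geq 1$ simply delegates to the proof of Theorem~C in \cite{GN2}. Your route -- the parameter-dependent implicit function theorem on $U^0\times K^0$ (legitimate here precisely because the fibre variable lies in the finite-dimensional space $E$, so no completeness hypotheses on $F$ are needed; such a theorem is available in this calculus, cf.\ \cite{IMP}), followed by continuous extension of the derivative formulas to $U\times K$ and induction on $r$, using that $(x,z)\mapsto D_2f(x,z)\in L(E,E)_b$ is $C^{r-1}$ by the componentwise argument as in Lemma~\ref{fin-dim-calc} -- is an acceptable self-contained substitute for that citation and buys a proof that does not refer the reader elsewhere, at the price of invoking the implicit function theorem as a black box.

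The step that does not hold as written is the invertibility of $D_2f(x,z)$ when $z\in\partial K$: this does \emph{not} ``follow from continuity of $D_2f$'', because $\GL(E)$ is open but not closed in $L(E,E)$, so a limit of invertible operators may well be singular. The fact is true, but the argument must use the inverse map: since $f_x^{-1}\in C^r(K,K)$ by hypothesis, the chain rule applied to $f_x^{-1}\circ f_x=\id_K$ on the dense interior gives $(f_x^{-1})'(f_x(z))\circ D_2f(x,z)=\id_E$ for $z\in K^0$; both factors are continuous on all of $K$, so the identity extends to every $z\in K$, whence $D_2f(x,z)$ is injective and therefore invertible, as $\dim E<\infty$. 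This repair is not cosmetic: your continuous extension of $dg$ evaluates the inversion map $j$ at $D_2f(x,g(x,z))$ also when $g(x,z)\in\partial K$, so invertibility there is exactly what makes the extension well defined. With this fixed, the remaining points (uniqueness of the solution of $f(x,y)=z$ in $K$, so the local implicit functions coincide with $g$; preservation of $K^0$ and $\partial K$ by invariance of domain; the induction step via stability of the $C^{r-1}$-property under composition on locally convex sets with dense interior) are all in order.
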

\begin{proof}
If $r=0$, then the graph of~$f$ is closed
in $U\times K\times K$.
As a consequence, the graph of~$g$
is closed in $U\times K\times K$
(being obtained from the former by flipping the second and third component).
Since~$K$ is compact, continuity of~$g$ follows (see, e.g.,
\cite[Lemma~2.1]{GN2}). For $r\in \N\cup\{\infty\}$,
we can repeat the proof of
\cite[Theorem~C]{GN2}
without changes.
\end{proof}
{\bf Proof of Proposition~\ref{main-prop}.}
The evaluation map
\[
\ve_k\colon C^k(M,E)\times M\to E,\quad (f,x)\mto f(x)
\]
is $C^{\infty,k}$
and thus $C^{\ell,k}$,
by \cite[Proposition~3.20]{AaS}.
Likewise, the evaluation
map $\ve_{k+\ell}\colon C^{k+\ell}(M,E)\times M\to E$
is $C^{\infty,k+\ell}$.
Let us show that the map
\[
H_{k,\ell}\colon C^{k+\ell}(M,E)\times \Omega_k
\to C^k(M,E),\quad
(f,g)\mto g+f\circ (\id_M+g)
\]
is $C^\ell$, for all
$k\in\N\cup\{\infty\}$ and $\ell\in \N_0\cup\{\infty\}$.
It suffices to show that
\[
\Gamma_{k,\ell}\colon C^{k+\ell}(M,E)\times \Omega_k\to C^k(M,E),\quad
(f,g)\mto f\circ (\id_M+g)
\]
is $C^\ell$, since
$H_{k,\ell}(f,g)=g+\Gamma_{k,\ell}(f,g)$.
This will hold if we can show that
$\Gamma_{k,\ell}$ is $C^{\infty,\ell}$
(see \cite[Lemma~3.15]{AaS}).
By \cite[Theorem~3.20]{Alz}, it suffices to show that
\[
\Gamma_{k,\ell}^\wedge\colon C^{k+\ell}(M,E)\times \Omega_k\times M\to E,
\quad
(f,g,x)\mto f(x+g(x))=
\ve_{k+\ell}(f,x+\ve_k(g,x))
\]
is $C^{\infty,\ell,k}$.
Now $\Omega_k\times M\to M$, $(g,x)\mto x$
is a smooth map and hence $C^{\ell,k}$.
As also~$\ve_k$ is $C^{\ell,k}$,
we find that
\[
h_2\colon \Omega_k\times M\to M,\quad (g,x)\mto x+g(x)=x+\ve_k(g,x)
\]
is $C^{\ell,k}$. The identity map $h_1\colon C^{k+\ell}(M,E)\to C^{k+\ell}(M,E)$,
$f\mto f$ is $C^\infty$. Since $\ve_{k+\ell}$ is $C^{\infty,k+\ell}$,
the Chain Rule in the form \cite[Lemma~3.16]{Alz}
% dort Druckfehler leider in zweitletzer Zeile!
shows that
\[
\Gamma_{k,\ell}^\wedge=\ve_{k+\ell}\circ (h_1\times h_2)
\]
is $C^{\infty,\ell,k}$.
Thus $H_{k,\ell}$ is $C^\ell$,
whence also $H_{k,\ell}|_{\Omega_{k+\ell}\times\Omega_k}$
is~$C^\ell$.
Now
\[
H_{k,\ell}(f,g)=(\id_M+f)\circ (\id_M+g)-\id_M\in \Omega_k\sub
C^k_{\str}(M,E)
\]
for all $f\in \Omega_{k+\ell}\sub\Omega_k$
and $g\in\Omega_k$, since $\Diff_{\face}^k(M)$
is closed under composition.
Hence $H_{k,\ell}|_{\Omega_{k+\ell}\times\Omega_k}$
co-restricts to a map
\[
h_{k,\ell}\colon \Omega_{k+\ell}\times \Omega_k\to\Omega_k\sub C^k_{\str}(M,E)
\]
which is $C^\ell$ as the vector subspace $C^k_{\str}(M,E)$
of $C^k(M,E)$ is closed (see \cite[Lemma~1.3.19]{GaN}).
Then also $c_{k,\ell}=\Phi_k^{-1}\circ h_{k,\ell}\circ (\Phi_{k+\ell}\times\Phi_k)$
is~$C^\ell$.\\[2.3mm]
For the discussion of the inversion maps,
note that, for each $k\in\N\cup\{\infty\}$,
$\Diff^k_{\face}(M)$
is a smooth submanifold
of $C^k(M,E)$ modeled
on the closed vector
subspace $C^k_{\str}(M,E)$ of $C^k(M,E)$.
In fact, since $\Omega_k$ is an open subset
of $C^k_{\str}(M,E)$
whose topology is induced by the compact-open
$C^k$-topology on $C^k(M,E)$,
we find an open subset $Q\sub C^k(M,E)$
with $Q\cap C^k_{\str}(M,E)=\Omega_k$.
Then $P:=\id_M+Q=\{\id_M+f\colon f\in Q\}$
is an open subset of $C^k(M,E)$
and
\[
\kappa\colon P\to Q,\quad \phi\mto\phi-\id_M
\]
is a chart for $C^k(M,E)$ such that
$\kappa(P\cap \Diff^k_{\face}(M))=\Omega_k=Q\cap C^k_{\str}(M,E)$.
As a consequence,
for
%all
$k\in \N\cup\{\infty\}$
and $\ell\in\N_0\cup\{\infty\}$,
the map $\iota_{k,\ell}$
will be~$C^\ell$ to $\Diff^k_{\face}(M)$
if we can show that~$\iota_{k,\ell}$ is
$C^\ell$ as a map to $C^k(M,E)$
(see \cite[Lemma~3.2.7]{GaN}).
By \cite[Theorem~3.25]{Alz}, the latter will hold if
\[
\iota_{k,\ell}^\wedge\colon \Diff^{k+\ell}_{\face}(M)\times M\to E,\quad
(\phi,z)\mto \iota_{k,\ell}(\phi)(z)=\phi^{-1}(z)
\]
is $C^{\ell,k}$. But $\iota_{k,\ell}^\wedge$
even is $C^{k+\ell}$;
in fact,
\[
f\colon \Diff^{k+\ell}_{\face}(M)\times M\to E,\quad
(\phi,x)\mto \phi(x)
\]
is $C^{\infty,k+\ell}$ (and hence $C^{k+\ell}$)
as the restriction to a submanifold
of the evaluation\linebreak
mapping
$\ve_{k+\ell}\colon C^{k+\ell}(M,E)\times M\to E$,
which is $C^{\infty,k+\ell}$.
Since
$\Diff^{k+\ell}_{\face}(M)$
is $C^\infty$-diffeomorphic to the open set
$\Omega_{k+\ell}\sub C^{k+\ell}_{\str}(M,E)$,
the $C^{k+\ell}$-property of
$\iota_{k,\ell}^\wedge\colon (\phi,z)\mto f(\phi,\cdot)^{-1}(z)$
follows from Lemma~\ref{GN-thmC}.\\[2.3mm]
Note that $c_{\infty,\infty}$
is the group multiplication of $\Diff^\infty_{\face}(M)$
and $\iota_{\infty,\infty}$ the group inversion.
As both of these mappings are smooth, $\Diff^\infty_{\face}(M)$
is a Lie group. $\,\square$
\section{Proof of Theorem~\ref{main-thm}}
Two lemmas will be useful for the proof of Theorem~\ref{main-thm}.
\begin{la}\label{der-to-face}
Let $E$ be a finite-dimensional
real vector space and $M\sub E$ be a convex polytope
with non-empty interior.
If $r>0$ and $\gamma\colon [0,r[\;\to M$
is a $C^1$-map,
then there exists $\ve>0$
such that
\[
\gamma(0)+t\gamma'(0)\in M\quad\mbox{for all $\,t\in [0,\ve]$.}
\]
\end{la}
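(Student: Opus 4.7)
The plan is to represent $M$ as a finite intersection of affine half-spaces and then analyze the behaviour of each defining affine functional along the curve~$\gamma$, distinguishing constraints that are active at $\gamma(0)$ from those that are not.

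Since $M$ is a convex polytope with non-empty interior, there exist finitely many linear functionals $\lambda_1,\ldots,\lambda_N\colon E\to\R$ and scalars $a_1,\ldots,a_N\in\R$ such that
\[
M \;=\; \bigcap_{i=1}^N \{x\in E\colon \lambda_i(x)\leq a_i\}.
\]
Write $x_0:=\gamma(0)\in M$ and $v:=\gamma'(0)\in E$. Let $I:=\{i\colon \lambda_i(x_0)=a_i\}$ be the set of active indices and let $J:=\{1,\ldots,N\}\setminus I$ be the inactive ones. For $i\in J$ we have $\lambda_i(x_0)<a_i$, so by continuity of $t\mto \lambda_i(x_0+tv)$ there exists $\ve_i>0$ with $\lambda_i(x_0+tv)<a_i$ for all $t\in[0,\ve_i]$.

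The key step is to show that $\lambda_i(v)\leq 0$ for every $i\in I$. Fix $i\in I$ and consider the $C^1$-map $h_i\colon[0,r[\;\to\R$, $h_i(t):=\lambda_i(\gamma(t))-a_i$. Since $\gamma(t)\in M$, we have $h_i(t)\leq 0$ for all $t\in[0,r[$, and $h_i(0)=0$ by the definition of $I$. Thus $h_i$ attains a maximum at $0$ from the right, which forces its right-hand derivative at $0$ to be $\leq 0$; but this derivative equals $\lambda_i(\gamma'(0))=\lambda_i(v)$. Hence $\lambda_i(v)\leq 0$, and consequently, for every $t\geq 0$,
\[
\lambda_i(x_0+tv)\;=\;\lambda_i(x_0)+t\lambda_i(v)\;=\;a_i+t\lambda_i(v)\;\leq\;a_i.
\]

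Setting $\ve:=\min\{\min_{i\in J}\ve_i,\,r/2\}$ (taking $\ve=r/2$ if $J=\emptyset$), we conclude that $\lambda_i(x_0+tv)\leq a_i$ for all $i\in\{1,\ldots,N\}$ and all $t\in[0,\ve]$, so $x_0+tv\in M$ on this interval, as required. I do not expect any real obstacle here; the only subtlety is the one-sided-derivative argument for active constraints, which is entirely elementary.
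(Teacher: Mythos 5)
Your proof is correct and follows essentially the same route as the paper: write $M$ as a finite intersection of half-spaces, show that each active functional satisfies $\lambda_i(\gamma'(0))\leq 0$ via the one-sided difference quotient at $0$, and handle the finitely many inactive constraints by continuity. No gaps; the argument matches the paper's.
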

\begin{proof}
We may assume
%that
$E\not=\{0\}$,
omitting a trivial case.
There are a finite set $\Lambda\not=\emptyset$
of non-zero linear functionals
$\lambda \colon E \to \R$
and numbers $a_\lambda\in\R$ such that
\begin{equation}\label{intersect-halfspace}
M=\{x\in E\colon (\forall \lambda\in \Lambda)\;\;
\lambda(x)\leq a_\lambda\}.
\end{equation}
Let $\Lambda_0:=\{\lambda\in\Lambda\colon \lambda(\gamma(0))=a_\lambda\}$.
Then
\[
\lambda(\gamma'(0))=(\lambda\circ\gamma)'(0)=
\lim_{t\to 0_+}\frac{\lambda(\gamma(t))-\lambda(\gamma(0))}{t}
\leq 0
\]
for each $\lambda\in\Lambda_0$,
as $\lambda(\gamma(t))\leq a_\lambda$
and $\lambda(\gamma(0))=a_\lambda$.
As a consequence,
\[
\lambda(\gamma(0)+t\gamma'(0))=a_\lambda+t\lambda(\gamma'(0))\leq a_\lambda
\]
for all $\lambda\in \Lambda_0$ and $t\geq 0$.
For all $\lambda\in \Lambda\setminus\Lambda_0$,
we have $\lambda(\gamma(0))<a_\lambda$.
As $\Lambda\setminus\Lambda_0$
is a finite set, we find $\ve>0$
such that $\lambda(\gamma(0))+t\lambda(\gamma'(0))\leq a_\lambda$
for all $t\in [0,\ve]$ and all $\lambda\in\Lambda\setminus\Lambda_0$.
For all $t\in [0,\ve]$,
we then have $\lambda(\gamma(0)+t\gamma'(0))\leq a_\lambda$
for all $\lambda\in\Lambda$.
Thus $\gamma(0)+t\gamma'(0)\in M$, by~(\ref{intersect-halfspace}).
\end{proof}
\begin{la}\label{face-into-face}
Let $E$ and $F$ be finite-dimensional
real vector spaces,
$M\sub E$ and $N\sub F$ be convex polytopes
with non-empty interior
and $\phi\colon U\to V$
be a $C^1$-map between
open subsets $U\sub M$ and $V\sub N$.
For $x\in M$,
let $M(x)$ be the face of~$M$
generated by~$x$,
and $E(x):=\Spann_\R(M(x)-x)$.
For $y\in N$, let $N(y)$ be the face of~$N$
generated by~$y$, and $F(y):=\Spann_\R(N(y)-y)$.
For $x\in U$, we~have:
\begin{itemize}
\item[\rm(a)]
$\phi'(x)(E(x))\sub F(\phi(x))$;
\item[\rm(b)]
If
$\phi'(x)\colon E\to F$ is injective,
then $\dim E(x)\leq \dim F(\phi(x))$;
\item[\rm(c)]
If $\phi\colon U\to V$
is a $C^1$-diffeomorphism,
then $\phi'(x)(E(x))=F(\phi(x))$.
\end{itemize}
\end{la}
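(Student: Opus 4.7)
\textbf{Proof plan for Lemma~\ref{face-into-face}.}
The central observation is that $x$ lies in the algebraic interior (relative interior) of the smallest face $M(x)$ containing it; this is a standard feature of the face lattice of a polytope. Consequently, for every $v\in E(x)=\Spann_\R(M(x)-x)$ we may choose $\ve>0$ so small that $x+tv\in M(x)\sub M$ for all $t\in[-\ve,\ve]$ and, shrinking $\ve$ further, such that $x+tv\in U$ throughout. Part (a) will follow by applying Lemma~\ref{der-to-face} to the two $C^1$-curves
\[
\gamma_\pm\colon [0,\ve[\;\to V\sub N,\quad t\mto \phi(x\pm tv),
\]
which have initial derivatives $\pm\phi'(x)(v)$. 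The lemma then yields $\delta>0$ with $\phi(x)\pm t\phi'(x)(v)\in N$ for all $t\in[0,\delta]$. Writing
\[
\phi(x)=\tfrac12\bigl(\phi(x)+\delta\phi'(x)(v)\bigr)+\tfrac12\bigl(\phi(x)-\delta\phi'(x)(v)\bigr)
\]
exhibits $\phi(x)$ as an interior convex combination of two points of~$N$. Since $N(\phi(x))$ is a face of~$N$ containing $\phi(x)$, both summands lie in $N(\phi(x))$, and therefore $\delta\phi'(x)(v)\in N(\phi(x))-\phi(x)\sub F(\phi(x))$. Dividing by $\delta$ gives~$\phi'(x)(v)\in F(\phi(x))$.

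Part (b) is then immediate from (a): if $\phi'(x)\colon E\to F$ is injective, then so is its restriction $\phi'(x)|_{E(x)}\colon E(x)\to F(\phi(x))$, and the dimension inequality follows.

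For part (c), assume $\phi\colon U\to V$ is a $C^1$-diffeomorphism. Differentiating the identities $\phi^{-1}\circ\phi=\id_U$ and $\phi\circ\phi^{-1}=\id_V$ via the chain rule (which applies on locally convex subsets with dense interior, using the definition of $df$ as a continuous extension) shows that $\phi'(x)\colon E\to F$ and $(\phi^{-1})'(\phi(x))\colon F\to E$ are mutually inverse linear isomorphisms. Applying (a) to $\phi^{-1}$ at the point $y=\phi(x)$ gives $(\phi^{-1})'(\phi(x))(F(\phi(x)))\sub E(\phi^{-1}(\phi(x)))=E(x)$. Applying $\phi'(x)$ to both sides yields $F(\phi(x))\sub \phi'(x)(E(x))$, while (a) gives the reverse inclusion, so equality holds.

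I expect the only subtle step is the opening move of part~(a): verifying that a direction $v\in E(x)$ can really be followed in both signs inside $M$. Once one invokes the standard fact that $x\in\algint(M(x))$, this reduces to unwinding $E(x)=\aff(M(x))-x$, after which Lemma~\ref{der-to-face} and the defining property of a face do all the remaining work. Parts (b) and (c) are then essentially formal consequences.
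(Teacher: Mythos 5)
Your proposal is correct and follows essentially the same route as the paper: part (a) via $x\in\algint(M(x))$, two-sided test curves $t\mapsto\phi(x\pm tv)$, Lemma~\ref{der-to-face}, and the face property of $N(\phi(x))$ applied to the midpoint convex combination; parts (b) and (c) as formal consequences, with (c) using $(\phi^{-1})'(\phi(x))=(\phi'(x))^{-1}$ and the inclusion from (a) applied to $\phi^{-1}$. No gaps to report.
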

\begin{proof}
(a) Let $w\in E(x)$.
Since $x\in \algint M(x)$,
there exists $r>0$ such that $x\pm tw\in M(x)\sub M$
for all $t\in [0,r]$.
After shrinking~$r$,
we may assume that $x\pm tw\in U$
for all $t\in [0,r]$ and thus $\phi(x\pm tw)\in N$.
Since $\pm \phi'(x)(w)=\frac{d}{dt}\big|_{t=0}\phi(x\pm t w)$,
Lemma~\ref{der-to-face}
provides $\ve>0$ such that
\[
\phi(x)\pm t\phi'(x)(w)\in N\quad\mbox{for all $\, t\in [0,\ve]$.}
\]
Notably, $v_+:=\phi(x)+\ve \phi'(x)(w)\in N$
and $v_-:=\phi(x)-\ve \phi'(x)(w)\in N$.
Since $(1/2)v_++(1/2)v_-=\phi(x)\in N(\phi(x))$
and $N(\phi(x))$ is a face of~$N$,
we deduce that $v_+,v_-\in N(\phi(x))$.
Thus $\ve \phi'(x)(w)=v_+ -\phi(x) \in N(\phi(x))-\phi(x)\sub F(\phi(x))$,
whence also $\phi'(x)(w)\in F(\phi(x))$.\\[2.3mm]
(b) is immediate from (a).\\[2.3mm]
(c) Set $y:=\phi(x)$.
By (a), we have $\phi'(x)(E(x))\sub F(y)$
and $(\phi^{-1})'(y)(F(y))\sub E(\phi^{-1}(y))=E(x)$.
Since $(\phi^{-1})'(y)=(\phi'(x))^{-1}$, applying $\phi'(x)$
to the latter inclusion we get
$F(y)\sub \phi'(x)(E(x))$, whence
$F(y)=\phi'(x)(E(x))$.
\end{proof}
Lemma~\ref{face-into-face}\,(c)
is analogous to the well-known boundary
invariance for manifolds
with corners (as in \cite[Theorem~1.2.12\,(a)]{MRO}).
\begin{defn}\label{defn-index}
Let $E$ be a finite-dimensional real
vector space of dimension~$n$
and $M\sub E$ be a convex polytope
with non-empty interior.
The \emph{index} of $x\in M$
is defined as
\[
\ind_M(x):=\dim (E/E(x))=n-\dim E(x)=n-\dim M(x),
\]
where $M(x)$ is the face of~$M$
generated by~$x$ and $E(x):=\Spann_\R (M(x)-x)$.
For $i\in \{0,1,\ldots, n\}$, we define
\[
\partial_i(M):=\{x\in M\colon \ind_M(x)=i\}.
\]
\end{defn}
In the situation of Definition~\ref{defn-index},
the following holds.
\begin{la}\label{partial-boundary}
For each $i\in \{0,1,\ldots,n\}$,
the set $\partial_i(M)$ is an
$(n-i)$-dimensional smooth submanifold of~$E$.
If $\cF_{n-i}(M)$ is the finite set
of all faces of~$M$
of dimension $n-i$, then
the topology induced by~$M$ on~$\partial_i(M)$
makes the latter the topological sum
of the algebraic interiors $\algint(F)$
for $F\in \cF_{n-i}(M)$.
The connected components
of $\partial_i(M)$
are the sets $\algint(F)$
for $F\in\cF_{n-i}(M)$;
they are open and closed in $\partial_i(M)$.
\end{la}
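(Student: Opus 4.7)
The plan is to exploit the well-known fact that a convex polytope admits the disjoint decomposition $M = \bigsqcup_F \algint(F)$ indexed by its (finitely many) faces, together with the observation that $x \in \algint(M(x))$ for each $x \in M$. Combined with $\ind_M(x) = n - \dim M(x)$, this immediately gives the set-theoretic identity
\[
\partial_i(M) \;=\; \bigsqcup_{F \in \cF_{n-i}(M)} \algint(F),
\]
which will be the backbone of the entire argument.

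First I would verify that each $\algint(F)$ is an $(n-i)$-dimensional smooth submanifold of~$E$. Since $\aff(F)$ is an $(n-i)$-dimensional affine subspace of~$E$ and $\algint(F)$ is, by definition, the relative interior of~$F$ inside $\aff(F)$, it is an open subset of $\aff(F)$ and hence a smooth submanifold of~$E$ of the claimed dimension.

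The technical core is showing that each $\algint(F)$ is open in $\partial_i(M)$ (with the subspace topology from~$M$). For this I would prove the local statement: for every $x \in M$, there is an open neighbourhood $U$ of~$x$ in~$M$ such that every face $G$ of~$M$ meeting $U$ satisfies $M(x) \subseteq G$. This uses only the finiteness of faces: each face $G$ not containing $x$ is closed and excludes $x$, so a finite intersection of Euclidean balls around~$x$ avoids all of them. Then for $x \in \algint(F)$, any $y \in U \cap \partial_i(M)$ lies in $\algint(G)$ for some face $G$ with $F = M(x) \subseteq G$ and $\dim G = n - i = \dim F$; but a face $G$ of~$M$ containing a face $F$ of equal dimension must equal $F$ (a proper face of a convex set has strictly smaller dimension), so $y \in \algint(F)$. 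This gives the desired openness.

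Once openness of each summand in the finite decomposition is established, the remaining assertions are formal. Each $\algint(G)$ for $G \neq F$ in $\cF_{n-i}(M)$ is open in $\partial_i(M)$, so the finite union of these sets is the open complement of $\algint(F)$; hence $\algint(F)$ is also closed in $\partial_i(M)$. This pair of properties, together with the disjoint union identity, is precisely the definition of the topological sum. Finally, since $\algint(F)$ is the relative interior of a convex set, it is convex and hence connected; being simultaneously open and closed in $\partial_i(M)$, it is a connected component, and all connected components arise in this way. The main obstacle is the local separation step isolating the faces meeting a small neighbourhood of~$x$; everything else is bookkeeping.
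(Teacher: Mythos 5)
Your argument is correct and follows essentially the same route as the paper: the decomposition of $\partial_i(M)$ into the relative interiors $\algint(F)$ of the faces $F\in\cF_{n-i}(M)$, openness of each piece in $\partial_i(M)$ via finiteness and closedness of the face collection, and affine charts giving the $(n-i)$-dimensional submanifold structure. The only minor difference is the openness step, where you separate a point $x$ from the finitely many faces not containing $x$ and use that a face containing a face of equal dimension equals it, while the paper instead separates $\algint(F)$ from the compact union of the remaining $(n-i)$-dimensional faces; both rest on the same standard polytope facts.
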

\begin{proof}
If $F,G\in\cF_{n-i}(M)$ 
such that $F\not=G$ and $F\cap G\not=\emptyset$,
then $F\cap G$ is a face of~$M$
of dimension $<n-i$
and $F\cap G\sub F\setminus\algint(F)$
as well as $F\cap G\sub G\setminus \algint(G)$,
entailing that $\algint(F)\cap \algint(G)=\emptyset$.
Thus
\begin{equation}\label{disj-union}
\algint(F)\cap\algint(G)=\emptyset\quad\mbox{for
all $\,F,G\in\cF_{n-i}(M)$ such that $\,F\not=G$.}
\end{equation}
Let $F\in\cF_{n-i}(M)$.
For each $G\in\cF_{n-i}(M)$,
we have $\algint(F)\cap G\sub F\cap G\sub F\setminus \algint(F)$
by the preceding and thus $\algint(F)\cap G=\emptyset$.
Thus
\[
K:=\bigcup_{G\in\cF_{n-i}(M)\setminus\{F\}}G
\]
is a compact subset of~$E$ such that $\algint(F)\cap K=\emptyset$.
Note that
$F=H\cap M$ for some hyperplane~$H$ in~$E$
(faces of polyhedra being exposed), whence
\[
F=M\cap \aff(F).
\]
%a fortiori.
Now $\algint(F)=U\cap \aff(F)$
for some open subset $U\sub E$.
After replacing $U$
by its intersection with the open
set $E\setminus K$ which contains $\algint(F)$,
we may assume that
$U\cap \algint(G)=\emptyset$
for all $G\in\cF_{n-i}(M)\setminus \{F\}$
and hence
\[
\partial_i(M)\cap U=U\cap \algint(F)=\algint(F),
\]
showing that $\algint(F)$ is open
in $\partial_i(M)$.
The topology induced by~$E$
therefore makes $\partial_i(M)$
the topological sum of the $\algint(F)$ with $F\in \cF_{n-i}(M)$.
If we choose $x\in \algint(F)$,
then $E(x)=\aff(F)-x$
and
\[
\phi\colon U\to U-x,\quad y\mto y-x
\]
is a $C^\infty$-diffeomorphism between
open subsets of~$E$ such that
\[
\phi(U\cap \partial_i(M))=\phi(\algint(F))=\phi(U\cap \aff(F))
=(U-x)\cap E(x).
\]
Thus $\partial_i(M)$ is a submanifold of~$E$
of dimension $\dim(E(x))=n-i$.
\end{proof}
\begin{la}\label{fin-index}
Let $E$ be a finite-dimensional
real vector space, $M\sub E$
be a convex polytope with non-empty interior,
and $k\in\N\cup\{\infty\}$.
Then $\Diff^k_{\face}(M)$
is a normal subgroup of finite index
in $\Diff^k(M)$.
\end{la}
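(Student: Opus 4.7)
The plan is to exhibit $\Diff^k_{\face}(M)$ as the kernel of a natural homomorphism from $\Diff^k(M)$ into the finite symmetric group on the set of faces of~$M$. Concretely, I would construct a map
\[
\rho\colon \Diff^k(M)\to\Sym(\cF(M)),\quad \phi\mto (F\mto \phi(F)),
\]
where $\cF(M)$ denotes the finite collection of faces of~$M$, and verify that $\rho$ is a well-defined group homomorphism with kernel exactly $\Diff^k_{\face}(M)$. Once this is in place, normality and finiteness of the index are immediate from the fact that $\Sym(\cF(M))$ is finite and $\Diff^k_{\face}(M)$ is a kernel.

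The main technical point --- and the only real obstacle --- is showing that every $\phi\in\Diff^k(M)$ actually sends faces to faces. First I would use Lemma~\ref{face-into-face}\,(c), applied with $N:=M$ and to suitable open neighbourhoods $U\sub M$ of an arbitrary point $x\in M$ (with $V:=\phi(U)$, which is open in $M$ because $\phi$ is a homeomorphism of $M$), to deduce $\phi'(x)(E(x))=E(\phi(x))$ for each $x\in M$. In particular $\dim E(\phi(x))=\dim E(x)$, so $\ind_M(\phi(x))=\ind_M(x)$ for all $x\in M$ in the sense of Definition~\ref{defn-index}. Hence $\phi(\partial_i(M))=\partial_i(M)$ for every $i\in\{0,1,\ldots,n\}$.

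Next, using Lemma~\ref{partial-boundary}, the connected components of $\partial_i(M)$ are exactly the sets $\algint(F)$ for $F\in\cF_{n-i}(M)$. Since $\phi|_{\partial_i(M)}$ is a self-homeomorphism of $\partial_i(M)$, it permutes these connected components; thus for each face~$F$ there exists a face $F'$ of the same dimension with $\phi(\algint(F))=\algint(F')$. Taking closures in the compact space $M$ and using that $F=\overline{\algint(F)}$ and $\phi$ maps closed sets to closed sets, I obtain $\phi(F)=F'$. This shows $\rho(\phi)\in\Sym(\cF(M))$ is well defined.

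Finally, it is routine that $\rho$ is a group homomorphism (since $(\phi\circ\psi)(F)=\phi(\psi(F))$) and that $\ker\rho=\Diff^k_{\face}(M)$ by the very definition of a face-respecting diffeomorphism. As $\cF(M)$ is finite, $|\Diff^k(M)/\Diff^k_{\face}(M)|\leq |\Sym(\cF(M))|<\infty$, and normality follows from the kernel description. The whole argument therefore reduces to the face-preservation step, whose rigour rests entirely on Lemmas~\ref{face-into-face} and~\ref{partial-boundary}.
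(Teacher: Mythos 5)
Your proposal is correct and follows essentially the same route as the paper: invariance of the index via Lemma~\ref{face-into-face}\,(c), permutation of the connected components $\algint(F)$ of $\partial_i(M)$ via Lemma~\ref{partial-boundary}, a closure argument to get $\phi(F)=F'$, and then realizing $\Diff^k_{\face}(M)$ as the kernel of the induced homomorphism into the finite group $\Sym(\cF(M))$. The only cosmetic difference is that the paper applies Lemma~\ref{face-into-face}\,(c) with $U=V=M$ directly rather than on local neighbourhoods, which changes nothing of substance.
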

\begin{proof}
We write $\cF(M)$
for the set of all faces of~$M$.
Let $n:=\dim(E)$ and $\phi\in \Diff^k(M)$.
By Lemma~\ref{face-into-face}\,(c),
we have $\ind_M(\phi(x))=\ind_M(x)$
for each $x\in M$, whence
$\phi(\partial_i(M))\sub \partial_i(M)$
for each $i\in \{0,1,\ldots,n\}$
and thus $\phi(\partial_i(M))=\partial_i(M)$,
as $M$ is the disjoint union of
$\partial_0(M),\ldots,\partial_n(M)$
and $\phi$ is surjective.
Since $\partial_i(M)$
is a submanifold of~$E$,
the inclusion map $\partial_i(M)\to E$
is smooth, and thus also
the inclusion map $j_i\colon \partial_iM\to M$.
Thus $\phi|_{\partial_i(M)}=\phi\circ j_i$
is~$C^k$.
As this map has image in $\partial_i(M)$,
which is a submanifold of~$E$,
we deduce that
\[
\phi_i:=\phi|_{\partial_i(M)}\colon \partial_i(M)\to\partial_i(M)
\]
is~$C^k$. Note that $(\phi^{-1})_i$
is inverse to~$\phi_i$, whence $\phi_i$
is a $C^k$-diffeomorphism
and hence a homeomorphism.
For each $F\in \cF_{n-i}(M)$,
the algebraic interior $\algint(F)$
is open and closed in the topological
sum $\partial_i(M)$.
As a consequence, $\phi(\algint(F))=\phi_i(\algint(F))$
is both open and closed in $\partial_i(M)$.
Being also non-empty and connected,
$\phi(\algint(F))$ is a connected component
of $\partial_i(M)$
and thus $\phi(\algint(F))=\algint(G)$
for some $G\in \cF_{n-i}(M)$.
As a consequence,
\[
\phi(F)=\phi(\overline{\algint(F)})=\overline{\phi(\algint(F))}
=\overline{\algint(G)}=G.
\]
Thus $\phi_*(F):=\phi(F)\in\cF(M)$
for each $F\in \cF(M)$.
Since $(\id_M)_*$
is the identity map on $\cF(M)$
and $(\phi\circ\psi)_*=\phi_*\circ\psi_*$
for all $\phi,\psi\in \Diff^k(M)$, we get a group homomorphism
\[
\pi\colon \Diff^k(M)\to \, \Sym(\cF(M)),\quad \phi\mto\phi_*
\]
to the group of all permutations of the finite set $\cF(M)$.
Thus $\Diff_{\face}^k(M)=\ker(\pi)$
is a normal subgroup of $\Diff^k(M)$ of finite index.
\end{proof}
{\bf Proof of Theorem~\ref{main-thm}.}
In view of the local description of Lie group
structures (analogous to
Proposition~18 in \cite[Chapter III, \S1, no.\,9]{Bou}),
we need only show that the group homomorphism
\[
I_\psi\colon \Diff^\infty_{\face}(M)\to\Diff^\infty_{\face}(M),\quad
\phi\mto \psi\circ \phi\circ \psi^{-1}
\]
is smooth for each $\psi\in \Diff(M)$.
Since $\Diff^\infty_{\face}(M)$ is a smooth submanifold
of $C^\infty(M,E)$,
we need only show that $I_\psi$ is smooth as a map to $C^\infty(M,E)$,
which will hold if we can show that the map
\[
I_\psi^\wedge\colon \Diff^\infty_{\face}(M)\times M\to E,\quad
(\phi,x)\mto I_\psi(\phi)(x)=\psi(\phi(\psi^{-1}(x)))
\]
is smooth (see \cite[Theorem~3.25 and Lemma~3.22]{AaS}).
We know that the evaluation map
$\ve\colon C^\infty(M,E)\times M\to E$,
$(f,x)\mto f(x)$ is smooth
(see \cite[Proposition 3.20 and Lemma~3.22]{AaS}).
Since $\Diff^\infty_{\face}(M)$ is a submanifold of $C^\infty(M,E)$,
also the restriction of $\ve$ to a map
\[
\Diff^\infty_{\face}(M)\times M\to E
\]
is smooth. As this map takes its values in~$M$,
also its corestriction
\[
\theta\colon \Diff^\infty_{\face}(M)\times M\to M,\quad
(\phi,x)\mto \phi(x)
\]
is smooth. The formula
\[
I_\psi^\wedge(\phi,x)=\psi(\theta(\phi,\psi^{-1}(x)))
\]
now shows that $I_\psi^\wedge$ is a smooth function
of $(\phi,x)$, as required. $\,\square$
\begin{rem}\label{boundary-fix}
If $E=\R^n$ and $M\sub E$ is a polytope
with non-empty interior,~then
\[
\Diff_{\partial M}(M):=\{\phi\in \Diff(M)\colon \mbox{$\phi(x)=x$
for all $x\in \partial M$}\}
\]
is a subgroup of~$\Diff^\infty_{\face}(M)$.
Since
\[
C^\infty_{\partial M}(M,E):=\{f\in C^\infty(M,E)\colon
\mbox{$f(x)=0$ for all $x\in \partial M$}\}
\]
is a closed vector subspace of $C^\infty_{\str}(M,E)$
and the chart
\[
\Phi_\infty\colon \Diff^\infty_{\face}(M)\to \Omega_\infty\sub C^\infty_{\str}(M,E)
\]
satisfies $\Phi_\infty(\Diff_{\partial M}(M))=\Omega_\infty\cap
C^\infty_{\partial M}(M,E)$,
we see that $\Diff_{\partial M}(M)$
is a submanifold (and hence a Lie subgroup)
of $\Diff^\infty_{\face}(M)$ modeled on $C^\infty_{\partial M}(M,E)$,
and thus also of $\Diff(M)$
(in the sense of~\cite[Definition 3.1.10]{GaN}).
\end{rem}
\begin{rem}
Let $E$ be a finite-dimensional real vector space,
$n:=\dim(E)$,
$M\sub E$ be a convex polytope with non-empty interior,
and $k\in\N_0\cup\{\infty\}$.
Using Lemma~\ref{partial-boundary},
we obtain the following more intuitive characterization:
A function $f\in C^k(M,E)$
is stratified (and thus in $C^k_{\str}(M,E)$)
if and only if $(\id_M,f)(\partial_i(M))\sub T(\partial_i(M))$
for each $i\in\{0,\ldots, n\}$,
identifying $T(\partial_i(M))$ with a subset
of $TM=M\times\R^n$ as usual.
That is, $f|_{\partial_i(M)}$
can be considered as a $C^k$-vector field
of the $(n-i)$-dimensional $C^\infty$-manifold $\partial_i(M)$
for all $i\in\{0,\ldots,n\}$.
\end{rem}
\begin{rem}\label{polyhedral-mfd}
Given $n\in\N_0$,
we define a \emph{locally polyhedral $C^\infty$-manifold
of dimension~$n$} as a Hausdorff
topological space~$M$,
together with a set $\cA$
of homeomorphisms $\phi\colon U_\phi\to V_\phi$
from open subsets $U_\phi\sub M$
onto open subsets of a polytope $P_\phi\sub\R^n$
with non-empty interior,
such that $\cA$ is a \emph{polyhedral smooth atlas}
in the sense that
$\bigcup_{\phi\in \cA}U_\phi=M$
and
$\phi\circ \psi^{-1}$ is $C^\infty$
for all $\phi,\psi\in\cA$,
and~$\cA$
is maximal among such atlasses.
Notably, $\cA$ is a rough $C^\infty$-atlas
in the sense of~\cite{GaN},
whence~$M$ can be considered
as a smooth manifold with rough boundary
in the sense of~\cite{GaN}.
By Lemma~\ref{face-into-face}, for all $x\in M$
we have
\[
\ind_{P_\phi}(\phi(x))=\ind_{P_\psi}(\psi(x))
\]
for all $\phi,\psi\in \cA$
such that $x\in U_\phi$ and $x\in U_\psi$,
whence
\[
\ind_M(x):=\ind_{P_\phi}(\phi(x))
\]
is a well-defined integer
in $\{0,1,\ldots, n\}$.
If we set
\[
\partial_i(M):=\{x\in M\colon \ind_M(x)=i\},
\]
then $\phi(U_\phi\cap \partial_i(M))=V_\phi\cap \partial_i(P_\phi)$
is an $(n-i)$-dimensional
submanifold of~$\R^n$ for each $\phi\in \cA$,
entailing that $\partial_i(M)$ is an $(n-i)$-dimensional
manifold (and a submanifold of~$M$
in the sense of \cite[Definition 3.5.14]{GaN};
cf.\ also \cite[Remark 3.5.16\,(a)]{GaN}).
\end{rem}
\appendix
\section{Non-existence of Michor-type local
additions}
If $M$ is a finite-dimensional
smooth manifold with corners
of dimension~$n$,
one can define its tangent
bundle $TM$ with fibre $\R^n$
and obtains the subset ${}^iTM$
of all tangent vectors $v$
of the form
\[
v=\dot{\gamma}(0)\in T_{\gamma(0)}M
\]
for some smooth curve $\gamma\colon [0,1[\;\to M$,
the so-called \emph{inner} tangent vectors
(see \cite[p.\,20]{Mic}).
If $M=[0,\infty[^k\times \R^{n-k}$
with $k\in \{0,\ldots,n\}$,
we identify $TM$ with $M\times\R^n$
as usual;
then
\[
{}^iTM=\{(x,y)\in M\times\R^n\colon (\forall j\in \{1,\ldots,k\})\;
x_j=0\;\Rightarrow\; y_j\geq 0\},
\]
writing $x=(x_1,\ldots, x_n)$ and $y=(y_1,\ldots, y_n)$
in components. Hence ${}^iTM$ is a convex subset
of $\R^n\times \R^n$
and its interior $M^0\times\R^n$ is dense
in ${}^iTM$.
Notably, ${}^iTM$ is a locally convex subset
of $\R^{2n}$ with dense
interior.
Since every $n$-dimensional
smooth manifold~$M$ with corners locally looks like
$[0,\infty[^n$,
we see that ${}^iTM$
locally looks like ${}^iT([0,\infty[^n)$,
whence ${}^iTM$ is a smooth manifold
with rough boundary in the sense of~\cite[Chapter~3]{GaN}.
It therefore makes sense
to speak about smooth functions
on ${}^iTM$.
(A smaller class of
modeling sets
and corresponding generalized manifolds
is proposed in \cite[p.\,20, Remark]{Mic}).
According to \cite[\S10.2, p.\,90]{Mic},
local additions on a smooth manifold~$M$
with corners are defined as follows:
\begin{numba}\label{local-add-michor}
A local addition $\tau$ on $M$ is a smooth mapping 
$\tau\colon {}^iTM \to M$ satisfying
\begin{itemize}
\item[(A1)]
$(\tau,\pi_{TM})\colon  {}^iTM \to M\times M$
is a diffeomorphism onto an open 
neighbourhood of the diagonal in $M\times M$;
\item[(A2)]
$\tau(0_x)=x$ for all $x\in M$.
\end{itemize}
\end{numba}
Here $0_x\in T_xM$ is the $0$-vector in
the tangent space $T_xM$ for $x\in M$.
Michor claims that every
smooth manifold with corners
admits a local addition
(see \cite[p.\,90, Lemma]{Mic}).
However:
\begin{prop}
$M:=[0,\infty[$
does not admit a local
addition in the sense
defined by Michor,
as in {\rm\ref{local-add-michor}}.
\end{prop}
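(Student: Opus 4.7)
The plan is to derive a contradiction from the assumption that a local addition $\tau\colon{}^iTM\to M$ exists, by exhibiting an interior point of ${}^iTM$ at which $\tau$ attains its minimum value~$0$; this forces the Jacobian of $(\tau,\pi_{TM})$ to be singular there, contradicting~(A1).

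First I make ${}^iTM$ explicit. For $M=[0,\infty[\,$,
\[
{}^iTM \;=\; (\,]0,\infty[\,\times\R)\,\cup\,(\{0\}\times[0,\infty[\,)\sub\R^2,
\]
whose interior in $\R^2$ is the open right half-plane $]0,\infty[\,\times\R$. In these coordinates, the map from~(A1) reads $(\tau,\pi_{TM})(x,y)=(\tau(x,y),x)$.

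Next I use~(A1) to pinpoint a pathological preimage. The set $U:=(\tau,\pi_{TM})({}^iTM)$ is open in $M\times M=[0,\infty[\,^2$ and contains the diagonal, hence contains a basic neighborhood $[0,\ve[\,\times[0,\ve[\,$ of $(0,0)$ for some $\ve>0$. Fix any $x\in\,]0,\ve[\,$. Then $(0,x)\in U$, so bijectivity of $(\tau,\pi_{TM})$ yields a unique preimage $p_x\in{}^iTM$ with $(\tau,\pi_{TM})(p_x)=(0,x)$; reading off the second component of the image, $\pi_{TM}(p_x)=x>0$, so $p_x=(x,y_x)$ for some $y_x\in\R$ and moreover $\tau(p_x)=0$. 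Because $x>0$, the point $p_x$ automatically lies in the topological interior $]0,\infty[\,\times\R$ of~${}^iTM$, i.e.\ in a genuine $\R^2$-open subset on which $\tau$ is smooth in the usual sense.

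Finally, since $\tau$ takes values in $M=[0,\infty[\,$, it is everywhere nonnegative and attains the minimum value $0$ at the interior point~$p_x$; the extreme value principle yields $\partial_x\tau(p_x)=\partial_y\tau(p_x)=0$. The Jacobian of $(\tau,\pi_{TM})$ at~$p_x$ is therefore
\[
\begin{pmatrix} \partial_x\tau(p_x) & \partial_y\tau(p_x) \\ 1 & 0 \end{pmatrix}\;=\;\begin{pmatrix} 0 & 0 \\ 1 & 0 \end{pmatrix},
\]
which is singular. But the restriction of the smooth map $(\tau,\pi_{TM})$ to the open set $]0,\infty[\,\times\R\sub\R^2$ must, by~(A1), be a smooth diffeomorphism onto an open subset of~$\R^2$ and in particular have nonsingular Jacobian at~$p_x$. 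This is the desired contradiction, so no such~$\tau$ can exist. The only subtle point is checking that $p_x$ actually lives in the interior rather than on the boundary ray $\{0\}\times[0,\infty[\,$, and this is given to us for free by the fact that $\pi_{TM}(p_x)=x>0$; the remainder is a routine combination of the extreme value principle and (local) invertibility for smooth maps on open subsets of $\R^2$.
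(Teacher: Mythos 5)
Your argument is essentially correct and takes a genuinely different route from the paper's. The paper works at the corner $(0,0)$ of ${}^iTM$: it uses (A2) to get $\frac{\partial\tau}{\partial x}(0,0)=1$, invertibility of the Jacobian plus a sign argument to get $\frac{\partial\tau}{\partial y}(0,0)>0$, and then a first-order Taylor expansion along the inner curve $t\mto(t,-\theta t)$ to push $\tau$ below~$0$. You instead do not use (A2) at all: since by (A1) the image is an open neighbourhood of the diagonal in $[0,\infty[^2$, it contains a point $(0,x)$ with $x>0$; its unique preimage $p_x$ lies over the base point $x>0$, hence in the interior $]0,\infty[\,\times\R$ of ${}^iTM$, and there $\tau$ attains its minimum value $0$, so $\frac{\partial\tau}{\partial y}(p_x)=0$ (this alone already makes the Jacobian $\bigl(\begin{smallmatrix}\ast & 0\\ 1 & 0\end{smallmatrix}\bigr)$ singular). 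This is shorter, avoids Taylor's theorem, and shows that (A1) alone is contradictory for $M=[0,\infty[$; the paper's corner computation, on the other hand, is the version that carries over directly to its second proposition on general manifolds with corners with $\partial M\not=\emptyset$.

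One step, however, is misjustified as written. You claim that, by (A1), the restriction of $(\tau,\pi_{TM})$ to $]0,\infty[\,\times\R$ is a diffeomorphism onto an \emph{open subset of $\R^2$}. That does not follow from (A1): the image of this restriction contains $(0,x)$ and is contained in $[0,\infty[^2$, so it is not open in~$\R^2$; asserting that interior points must map to $\R^2$-interior points is a form of boundary invariance, which is not free in this setting (it is the kind of statement the paper proves separately, cf.\ Lemma~\ref{face-into-face}\,(c)) and is essentially what your contradiction is meant to establish. What you actually need is only that $J_{(\tau,\pi_{TM})}(p_x)$ is invertible, and this follows from the chain rule applied to $(\tau,\pi_{TM})^{-1}\circ(\tau,\pi_{TM})=\id$ at $p_x$: the inverse is smooth on the image (an open subset of $M\times M$, i.e.\ a locally convex set with dense interior), so its differential at the boundary point $(0,x)$ exists and is a two-sided inverse of the Jacobian at $p_x$. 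This is exactly the principle the paper invokes when it states that the Jacobi matrix at $(0,0)$ ``must be invertible''. With this one-line repair, your proof is complete.
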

\begin{proof}
We identify $TM$ with $M\times\R$.
Thus
\[
{}^iTM=(\{0\}\times [0,\infty[)\cup (]0,\infty[\,\times\R).
\]
Suppose that $\tau\colon {}^iTM\to M$
was a local addition. We shall derive a contradiction.
Since $\phi\colon {}^iTM\to M\times M$, $(x,y)\mto (\tau(x,y),x)$
is a diffeomorphism onto on open neighbourhood of
the diagonal,
the Jacobi matrix $J_\phi(0,0)$
must be invertible.
By~(A2), we have $\tau(x,0)=x$,
whence $\frac{\partial \tau}{\partial x}(0,0)=1$.
Thus
\[
J_\phi(0,0)=\left(
\begin{array}{cc}
1 & \frac{\partial \tau}{\partial y}(0,0)\\
1 & 0
\end{array}\right),
\]
and invertibility implies that $\frac{\partial \tau}{\partial y}(0,0)\not=0$.
Since $\tau(0,y)\in [0,\infty[$
for all $y\in [0,\infty[$,
we have $\tau(0,y)\geq 0$ and
thus $\frac{\partial \tau}{\partial y}(0,0)\geq 0$,
using that $\tau(0,0)=0$.
Thus $\frac{\partial \tau}{\partial y}(0,0)>0$.
Choose $\theta>0$
such that
\[
\theta\, \frac{\partial \tau}{\partial y}(0,0)>1;
\]
thus $\ve:=\theta\frac{\partial\tau}{\partial y}(0,0)-1>0$.
Consider the smooth map
\[
h\colon [0,\infty[\,\to M\sub \R,\quad
t\mto \tau(t,-\theta t).
\]
Then
\[
h'(0)=\frac{\partial \tau}{\partial x}(0,0)-\theta\frac{\partial \tau}{\partial y}(0,0)
=- \ve.
\]
Since $h(0)=0$, Taylor's
Theorem yields
\[
h(t)=-\ve t +R(t)
\]
with $R(t)/t\to 0$ for $t\to 0$.
There exists $\delta> 0$ such that
$|R(t)|/t<\ve$ for all $t\in \;]0,\delta]$,
whence $R(t)<\ve t$ and thus
\[
h(t)<-\ve t+\ve t=0.
\]
This contradicts $h(t)\geq 0$,
which holds as $h(t)\in M$.
\end{proof}
It would not help to assume that, instead,
the local addition $\tau$
is only defined on an open neighbourhood
$\Omega$ of the $0$-section in ${}^iTM$.
In fact,
such a neighbourhood~$\Omega$ of
$[0,\infty[\,\times \{0\}$
in ${}^iT([0,\infty[)$
contains $]0,\rho[\,\times\,]{-\rho},\rho[$
for some $\rho>0$.
There exists $\delta\in \; ]0,\rho]$
such that $\theta \delta <\rho$
and thus $(t,-\theta t)\in\Omega$
for all $t\in [0,\delta[$.
Hence $h(t):=\tau(t,-\theta t)\in M$.
After shrinking~$\delta$
if necessary,
a contradiction is obtained
as in the preceding proof.\\[2.3mm]
An analogous remark applies in the following
more general situation.
We\linebreak
consider smooth manifolds
with corners as in \cite{Mic} here,
which are assumed finite-dimensional
and paracompact.
\begin{prop}
If $M$ is a
smooth manifold with corners such that $\partial M\not=\emptyset$,
then~$M$
does not admit a smooth local addition in Michor's
sense.
\end{prop}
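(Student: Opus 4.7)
The plan is to localize the argument of the preceding proposition. First, pick a boundary point $p\in\partial_1(M)$; such a point exists whenever $\partial M\ne\emptyset$, since in any corner chart around a point of index $k\ge 1$ one can perturb slightly to a point lying on exactly one codimension-one face. Choosing a chart around $p$, we may assume that $M$ is a neighbourhood of $0$ in $[0,\infty[\,\times\R^{n-1}$, with $p=0$, and that the hypothetical local addition $\tau$ becomes a smooth map $\tilde\tau$, defined on an open neighbourhood of $(0,0)$ in ${}^iT([0,\infty[\,\times\R^{n-1})$, with values in $[0,\infty[\,\times\R^{n-1}$ and satisfying $\tilde\tau(x,0)=x$.

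Next I would analyze the derivative of $\tilde\tau$ at $(0,0)$. From $\tilde\tau(x,0)=x$ one reads $\partial\tilde\tau/\partial x(0,0)=I_n$; set $A:=\partial\tilde\tau/\partial y(0,0)$. Condition (A1) then forces $A$ to be invertible, exactly as in the preceding proof (the Jacobian of $(\tilde\tau,\pi)$ becomes $\bigl(\begin{smallmatrix}I_n & A\\ I_n & 0\end{smallmatrix}\bigr)$). Now exploit that $\tilde\tau_1(0,y)\ge 0$ on its domain, which for basepoint $0$ requires $y_1\ge 0$ but leaves $y_2,\ldots,y_n$ unconstrained, and that $\tilde\tau_1(0,0)=0$. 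This one-sided minimum yields $\partial\tilde\tau_1/\partial y_j(0,0)=0$ for $j\ge 2$ (interior directions) and $\partial\tilde\tau_1/\partial y_1(0,0)\ge 0$. Hence the first row of $A$ has the form $(a,0,\ldots,0)$ with $a\ge 0$, and invertibility of $A$ then forces $a>0$.

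The final step is a direct transcription of the one-dimensional argument. Define
\[
h(s):=\tilde\tau_1\bigl((s,0,\ldots,0),\,(-\theta s,0,\ldots,0)\bigr)
\]
for small $s\ge 0$ and a parameter $\theta>0$ to be chosen. For $s>0$ the basepoint has first coordinate $s>0$, so $(-\theta s,0,\ldots,0)$ lies in the inner tangent cone; for $s=0$ the tangent vector is~$0$. Thus $h$ is a smooth function on $[0,\delta[$ for some $\delta>0$, and $h(s)\ge 0$ since $\tilde\tau$ maps into $[0,\infty[\,\times\R^{n-1}$. A direct chain-rule computation gives $h'(0)=1-\theta a$; choosing $\theta>1/a$ makes $h'(0)<0$, and Taylor's theorem (applied exactly as in the preceding proposition) yields some $\delta'\in\;]0,\delta]$ with $h(s)<0$ for $s\in\;]0,\delta']$, contradicting $h(s)\ge 0$. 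The remark about $\tau$ being defined only on an open neighbourhood of the zero section applies verbatim, since everything takes place in an arbitrarily small neighbourhood of $(p,0_p)$.

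The main obstacle is Step~2: one must carefully extract from the constraint ``$\tilde\tau$ maps into a corner'' both that the relevant off-diagonal entries of the $y$-derivative vanish and that the diagonal entry is non-negative, and then use invertibility of~$A$ to promote this non-negativity to strict positivity. Once $a>0$ is in hand, the rest is literally the argument for $M=[0,\infty[$ carried out in a one-dimensional slice, with the extra $\R^{n-1}$ factor playing no role.
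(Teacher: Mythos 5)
Your proposal is correct and follows essentially the same strategy as the paper: transport $\tau$ to a corner chart, use invertibility of the Jacobian of $(\tilde\tau,\pr_1)$ together with the one-sided constraint $\tilde\tau_1\geq 0$ to produce a strictly positive $y$-derivative, and then derive a contradiction along a curve whose basepoint moves into the interior with tangent vector scaled by $-\theta$, via Taylor's theorem. The only deviation is cosmetic: you first pass to an index-$1$ boundary point (using two-sided directions to kill the entries $\partial\tilde\tau_1/\partial y_j(0,0)$ for $j\geq 2$), whereas the paper works at a corner of arbitrary index, picks any $j$ with $\partial\sigma_1/\partial y_j(0,0)\neq 0$, and uses the diagonal basepoint $t(1,\ldots,1)$; both variants are sound.
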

\begin{proof}
Suppose we could find
a smooth local addition $\tau\colon {}^iTM\to M$
in Michor's sense. We derive a contradiction.
Let $n\in\N$ be the dimension of~$M$.
For $p\in \partial M$, there exist
$k\in \{1,\ldots, n\}$,
an open $p$-neighbourhood
$U\sub M$ and
a $C^\infty$-diffeomorphism
$\phi\colon U\to [0,\infty[^k\times\R^{n-k}=:V$
such that $\phi(p)=0$.
Let
\[
F:={}^iTV\sub TV
=V\times \R^n
\]
be the set of all $(x,y)\in V\times\R^n$
with $x=(x_1,\ldots, x_n)$ and $y=(y_1,\ldots, y_n)$
such that $y_j\geq 0$ for all
$j\in \{1,\ldots, k\}$ such that $x_j=0$.
Then~$F$
is a convex subset of $\R^n\times\R^n$ with dense interior.
Moreover, $\tau^{-1}(U)$ is an open
$0_p$-neighbourhood in ${}^iTM$ and
\[
Q:=T\phi((TU)\cap \tau^{-1}(U))
\]
is an open $(0,0)$-neighbourhood in $F\sub V\times \R^n$,
whence $Q$ is a locally convex subset
of $\R^n\times\R^n$ with dense interior.
We obtain a smooth map
\[
\sigma := \phi\circ \tau\circ T\phi^{-1}|_Q\colon Q \to V\sub \R^n
\]
such that
\[
\psi:=(\sigma,\pr_1)\colon Q \to V\times V,\quad (x,y)\mto (\sigma(x,y),x)
\]
is a $C^\infty$-diffeomorphism onto an open neighbourhood
of the diagonal in $V\times V$.
For some $\rho>0$,
we have $P:=\;]{-\rho},\rho[^{2n}\cap F\sub Q$.
Write $\sigma|_P=(\sigma_1,\ldots,\sigma_n)$
in terms of the components
$\sigma_1,\ldots,\sigma_n\colon P\to\R$.
Using the partial maps $\sigma_0:=\sigma(0,\cdot)\colon[0,\infty[^k\times\R^{n-k}\to\R^n$
and $\sigma^0:=\sigma(\cdot,0)\colon [0,\rho[^k\times ]{-\rho},\rho[^{n-k}
\to\R^n$
of $\sigma|_P$,
the Jacobian of $\psi$ at $(0,0)\in P\sub \R^n\times\R^n$
can be regarded as the block matrix
\[
J_\psi(0,0)=\left(
\begin{array}{cc}
J_{\sigma^0}(0) & J_{\sigma_0}(0)\\
{\bf 1} & {\bf 0}
\end{array}\right).
\]
The invertibility of the Jacobian
implies that the final $n$ columns
must be linearly independent.
Notably, we must have
\[
\frac{\partial \sigma_1}{\partial y_j}(0,0)\not=0
\]
for some $j\in \{1,\ldots, n\}$.
Let $e_1,\ldots,e_n\sub\R^n$ be the standard basis vectors.
For small $t\geq 0$,
we have $(0,te_j)\in Q$, whence
$\sigma(0,te_j)\in V$, entailing that
$\sigma_1(0,te_j)\geq 0$ and thus
\[
\frac{\partial \sigma_1}{\partial y_j}(0,0)=
\lim_{t\to 0_+}\frac{\sigma_1(0,te_j)}{t}\geq 0,
\]
exploiting that $\sigma_1(0,0)=0$.
There exists $\theta>0$
such that
\[
\ve:=\theta\, \frac{\partial \sigma_1}{\partial y_j}(0,0)
-1 >0.
\]
There exists $\delta>0$ such that
\[
t (1,\ldots,1,{-\theta e_j}) \in Q
\]
for all $t\in [0,\delta]$
(with $1$ in the first $n$
slots).
Then
\[
h\colon [0,\delta]\to\R,\quad t\mto \sigma_1(t(1,\ldots, 1,-\theta e_j))
\]
is a smooth function such that
\[h'(0)=\sum_{i=1}^n\underbrace{\frac{\partial \sigma_1}{\partial x_i}(0,0)}_{=\delta_{1,i}}-\theta\,\frac{\partial\sigma_1}{\partial y_j}(0,0)=
1-\theta\frac{\partial\sigma_1}{\partial y_j}(0,0)=-\ve<0,
\]
where $\delta_{1,i}$ denotes Kronecker's delta.
Since $h(0)=0$,
after shrinking~$\delta$, we can achieve
that $h(t)<0$ for all $t\in \;]0,\delta]$.
Thus $\sigma_1(\delta(1,\ldots,1,-\theta e_j))=h(\delta)<0$,
contradicting the fact $\sigma(Q)\sub V$
and thus $\sigma_1(x,y)\geq 0$
for all $(x,y)\in Q$.
\end{proof}
\noindent{\bf Helge Gl\"{o}ckner}, Universit\"{a}t Paderborn,
Institut f\"{u}r Mathematik,\\
Warburger Str.\ 100, 33098 Paderborn, Germany;
\,{\tt glockner@math.upb.de}
\end{document}